\DeclareMathOperator\dist{dist}
\newcommand{\norm}[1]{\left\lVert#1\right\rVert}
\newtheorem{theorem}{Theorem}[section]
\newtheorem{proposition}[theorem]{Proposition}
\newtheorem{corollary}[theorem]{Corollary}
\newtheorem{lemma}[theorem]{Lemma}
\theoremstyle{definition}
\newtheorem{example}[theorem]{Example}
\theoremstyle{remark}
\renewcommand\Re{\operatorname{Re}}
\renewcommand\Im{\operatorname{Im}}
\author{Athanasios Kouroupis}
\address{Department of Mathematical Sciences, Norwegian University of Science and Technology (NTNU), 7491 Trondheim, Norway}
\email{athanasios.kouroupis@ntnu.no}
\title{Composition operators and generalized primes}
\begin{document}
\begin{abstract}
We study composition operators on the Hardy space $\mathcal{H}^2$ of Dirichlet series with square summable coefficients. Our main result is a necessary condition, in terms of a Nevanlinna-type counting function, for a certain class of composition operators to be compact on $\mathcal{H}^2$. To do that we extend our notions to a Hardy space  $\mathcal{H}_{\Lambda}^2$ of generalized Dirichlet series, induced in a natural way by a sequence of Beurling's primes.
\end{abstract}
\maketitle
\section{Introduction}
We consider the increasing sequence $\{p_n\}_{n\geq1}$ of primes and an arbitrary  increasing sequence $\{q_n\}_{n\geq1}$ satisfying the following:
\begin{enumerate}[(i)]
	\item The set $\{\log(p_n)\}_{n\geq1}\cup\{\log(q_n)\}_{n\geq1}$ is $\mathbb{Q}$-linear independent.
	\item $\{q_n\}_{n\geq1}$ is increasing, unbounded and each term is greater than $1$.
\end{enumerate}

For our purposes we will say that a real number $q>1$ is a generalized prime if the set $\{\log(p_n)\}_{n\geq1}\cup\{\log(q)\}$ is $\mathbb{Q}$-linear independent.

We will denote by  $\mathbb{N}_{p,q}=\{\lambda_n\}_{n\geq 1}$ the increasing sequence of numbers that can be written as a (unique) finite product of terms of the set $\{p_n\}_{n\geq0}\cup\{q_n\}_{n\geq1}$, i.e.

$$\lambda=p^aq^b:=p_1^{a_1}\cdot p_2^{a_2}\cdot\,\dots\,\cdot q_1^{b_1}\cdot q_2^{b_2}\cdot\dots\,.$$

A Dirichlet series is a function $g$ of the form
$$g(s)=\sum\limits_{n\geq 1}\frac{a_n}{n^s},\qquad s=\sigma+it.$$

The set of numbers $\mathbb{N}_{p,q}=\{\lambda_n\}_{n\geq 1}$ corresponds to generalized Dirichlet series, meaning function of the form
$$f(s)=\sum\limits_{n\geq 1}\frac{a_n}{\lambda_n^s},\qquad s=\sigma+it.$$
It is well-known that if a generalized Dirichlet series converges at a point $s_0=\sigma_0+it_0$, then it converges for every $s\in\mathbb{C}_{\sigma_0}$, where by $\mathbb{C}_\theta$ we denote the half-plane $\{z: \Re z\geq \theta\},\,\theta\in\mathbb{R}$.

The first to introduce such systems of numbers was Beurling \cite{B37}. Studying general Beurling's systems gives us a better understanding of the exceptional system of the classical primes. We refer the interested reader to \cite{D69,HL06,DMV06, ZHA07} for results related to number theory, like the prime number theorem and the Riemann hypothesis. 
Our point of view is more operator theoretical, a system of Beurling's primes naturally induces a Hardy space of generalized Dirichlet series, with frequencies  $\Lambda=\{\log \lambda_n\}_{n\geq 1}$ \cite{HEL69}. The idea behind using such systems is that the behavior of certain operators does not depend on the choice of primes.

The space $\mathcal{H}_{\Lambda}^2$ of generalized Dirichlet series with square summable coefficients is defined as 
$$\mathcal{H}_{\Lambda}^2=\left\{f(s)=\sum\limits_{n\geq 1}\frac{a_n}{\lambda_n^s}:\norm{f}^2_{\mathcal{H}_{\Lambda}^2}=\sum_{n\geq1}|a_n|^2<+\infty\right\}.$$

The Hardy space $\mathcal{H}^2$ \cite{HLS97} is the subspace of $\mathcal{H}_{\Lambda}^2$ containing all Dirichlet series,
$$\mathcal{H}^2=\left\{f(s)=\sum_{n\geq1}\frac{a_n}{n^s}:\norm{f}_{\mathcal{H}^2}^2=\sum_{n\geq1}|a_n|^2<+\infty \right\}.$$

Gordon and Hedenmalm \cite{GH99} determined the class $\mathfrak{G}$ of analytic functions $\psi:\mathbb{C}_{\frac{1}{2}}\rightarrow\mathbb{C}_{\frac{1}{2}}$ that induce  bounded  composition operators $C_\psi (f)=f\circ\psi$ on $\mathcal{H}^2$.
The class of symbols $\mathfrak{G}$ consists of all $\psi(s)=c_0s+\varphi(s)$, where $c_0$ is a non-negative integer and $\varphi$ is a Dirichlet series such that:
\begin{enumerate}[(i)]
	\item If $c_0=0$, then $\varphi(\mathbb{C}_0)\subset\mathbb{C}_\frac{1}{2}$.\label{itm1}
	\item If $c_0\geq 1$, then $\varphi(\mathbb{C}_0)\subset\mathbb{C}_0$ or $\varphi\equiv i\tau$ for some $\tau\in\mathbb{R}$.\label{itm2}
\end{enumerate}
Furthermore, a symbol $\psi\in\mathfrak{G}$ with $c_0\geq1$ induces a norm-one composition operator. We will use the notation $\mathfrak{G}_0$ and $\mathfrak{G}_{\geq 1}$ for the subclasses that satisfy \eqref{itm1} and \eqref{itm2}, respectively.

Defining the space $\mathcal{H}_{\Lambda}^2$, in some sense we added infinitely many prime-like numbers on the structure of $\mathcal{H}^2$. Our first aim is to prove that this does not have an effect on the behavior, meaning boundedness and compactness of a composition operator with symbol $\psi\in\mathfrak{G}_{\geq1}$.

\begin{theorem}\label{bddG1}
A symbol $\psi(s)=c_0s+\varphi(s)\in\mathfrak{G}_{\geq1}$, induces a bounded operator $C_\psi$ on $\mathcal{H}_{\Lambda}^2$ with norm $\norm{C_\psi}=1$.
\end{theorem}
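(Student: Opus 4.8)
The lower bound $\norm{C_\psi}\geq1$ is free: the constant function $\lambda_1^{-s}\equiv 1$ (recall $\lambda_1=1$) is fixed by $C_\psi$, so the entire content is the contraction $\norm{C_\psi f}_{\mathcal{H}_{\Lambda}^2}\leq\norm{f}_{\mathcal{H}_{\Lambda}^2}$. I would split along the definition of $\mathfrak{G}_{\geq1}$. When $\varphi\equiv i\tau$ the operator is an isometry: since $\lambda_n=p^aq^b$ gives $\lambda_n^{c_0}=p^{c_0a}q^{c_0b}\in\mathbb{N}_{p,q}$ and $\lambda_n\mapsto\lambda_n^{c_0}$ is injective by unique factorization, while the scalars $\lambda_n^{-i\tau}$ are unimodular, one has $C_\psi f(s)=\sum_n a_n\lambda_n^{-i\tau}(\lambda_n^{c_0})^{-s}$ with $\norm{C_\psi f}^2=\sum_n|a_n|^2=\norm{f}^2$. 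So $\norm{C_\psi}=1$ here, and the remainder treats the case $\varphi(\mathbb{C}_0)\subset\mathbb{C}_0$, i.e.\ $\Re\varphi\geq0$.

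For that case I would first reduce to generalized Dirichlet polynomials. Because $c_0\geq1$ and $\Re\varphi\geq0$, one has $\Re\psi(s)=c_0\Re s+\Re\varphi(s)\geq\Re s$, so $\psi$ maps into itself any half-plane $\mathbb{C}_\theta$ with $\theta\geq0$; choosing $\theta$ so that point evaluations of $\mathcal{H}_{\Lambda}^2$ are bounded on $\mathbb{C}_\theta$, any norm-approximating sequence of polynomials $P_k\to f$ converges locally uniformly there, hence $P_k\circ\psi\to f\circ\psi$ locally uniformly, and once the contraction is known for the $P_k$ the sequence $C_\psi P_k$ converges in norm to an element of $\mathcal{H}_{\Lambda}^2$ that must coincide with $f\circ\psi$. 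It is worth checking at this stage that $C_\psi P$ really is a $\Lambda$-Dirichlet series: writing $\lambda_n^{-\psi(s)}=(\lambda_n^{c_0})^{-s}\lambda_n^{-\varphi(s)}$ and expanding $\lambda_n^{-\varphi(s)}=e^{-\varphi(s)\log\lambda_n}$, the second factor is an ordinary Dirichlet series whose frequencies are nonnegative integer combinations of the $\log p_j$ and whose modulus is $\leq1$ on $\mathbb{C}_0$, so every frequency of $C_\psi P$ has the form $\log(\lambda_n^{c_0}m)\in\Lambda$.

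The polynomial bound I would extract from the mean value (Carlson type) identity $\norm{g}_{\mathcal{H}_{\Lambda}^2}^2=\lim_{\sigma\to0^+}\lim_{T\to\infty}\frac{1}{2T}\int_{-T}^{T}|g(\sigma+it)|^2\,dt$, whose validity for $\Lambda$-Dirichlet series rests solely on the $\mathbb{Q}$-linear independence of $\{\log p_n\}_{n}\cup\{\log q_n\}_{n}$, which renders $\{\lambda_n^{-it}\}$ a Besicovitch orthonormal system. Applying it to $g=C_\psi P$ turns the inner integrand into $|P(\psi(\sigma+it))|^2$, and the plan is to dominate these vertical means by those of $P$ using $\Re\psi(\sigma+it)\geq\sigma$ together with $|\lambda_n^{-\varphi}|\leq1$, then let $\sigma\to0^+$ by monotone convergence.

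This last step is where I expect the real difficulty. In contrast with the translation case, the cross terms in $\frac{1}{2T}\int_{-T}^{T}|P(\psi(\sigma+it))|^2\,dt$ do not simply cancel, because $t\mapsto\varphi(\sigma+it)$ is a nonconstant almost periodic function and orthogonality cannot be read off directly. The way around it is to use the equidistribution of the curve $t\mapsto\psi(\sigma+it)$ on the relevant compact group (Birkhoff/Weyl), exactly as in Gordon and Hedenmalm's treatment of $\mathcal{H}^2$; the point I would emphasize is that this mechanism sees only the frequency structure and the independence hypothesis and is blind to the arithmetic of the ordinary primes, which is precisely why the classical contraction transfers verbatim to $\mathcal{H}_{\Lambda}^2$ and delivers $\norm{C_\psi}=1$.
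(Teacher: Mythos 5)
Your overall strategy is genuinely different from the paper's, and the comparison is instructive. You propose to re-run the Gordon--Hedenmalm argument from scratch in the generalized setting: reduce to $\Lambda$-polynomials, apply a Carlson-type mean-value identity to $C_\psi P$, and handle the non-cancelling cross terms by equidistribution/subordination ``exactly as in Gordon and Hedenmalm.'' That last clause is the entire analytic content of the theorem, and it is asserted rather than carried out. Your meta-claim --- that the $c_0\geq 1$ mechanism sees only the frequency structure and the $\mathbb{Q}$-independence, not the arithmetic of the ordinary primes --- is believable, but as written the proposal outsources precisely the step you yourself flag as ``the real difficulty.'' There is also a secondary soft spot in the polynomial-approximation step: bounded point evaluations of $\mathcal{H}_\Lambda^2$ on some half-plane $\mathbb{C}_\theta$ require $\sum_n\lambda_n^{-2\theta}<\infty$, and the standing hypotheses on $\{q_n\}$ (increasing, unbounded, each term $>1$) impose no density restriction, so this abscissa can be infinite; identifying the norm-limit of $C_\psi P_k$ with $f\circ\psi$ pointwise therefore needs more care than in the classical case.

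The paper avoids all of this by \emph{not} reproving Gordon--Hedenmalm. It uses the orthogonal decomposition $\mathcal{H}_\Lambda^2=\bigoplus_{k}b_k^{-s}\mathcal{H}^2$, where $\{b_k\}$ enumerates the multiplicative semigroup generated by the $q_n$, together with the identity
$$C_\psi\bigl(b_k^{-s}f\bigr)=b_k^{-c_0s}\,b_k^{-\varphi(s)}\,C_\psi(f),\qquad f\in\mathcal{H}^2 .$$
Since $\Re\varphi\geq0$ gives $\norm{b_k^{-\varphi}}_{\mathcal{H}^\infty}\leq1$, and the Bohr lift is multiplicative and norm-preserving on $\mathcal{H}^\infty$, each block satisfies $\norm{C_\psi(b_k^{-s}f)}_{\mathcal{H}_\Lambda^2}\leq\norm{C_\psi(f)}_{\mathcal{H}^2}\leq\norm{f}_{\mathcal{H}^2}$, where the last inequality is the \emph{classical} Gordon--Hedenmalm contraction quoted as a black box; the images $b_k^{-c_0s}\mathcal{H}^2$ are mutually orthogonal, so $C_\psi=\bigoplus_kC_{\psi,k}$ has norm $1$. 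This bootstrap is entirely algebraic and never evaluates a general element of $\mathcal{H}_\Lambda^2$ pointwise. You are in fact one step away from it: your own factorization of the frequencies, $\lambda_n^{c_0}m$ with $\lambda_n=b_k m_n$ and $\lambda_n^{-\varphi}=b_k^{-\varphi}m_n^{-\varphi}$, is exactly what the paper exploits. If you want to salvage your route, the honest version is to write out the half-plane subordination/ergodic argument for generalized Dirichlet polynomials; otherwise, the reduction above turns the theorem into a two-line consequence of the known $\mathcal{H}^2$ result.
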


\begin{theorem}\label{suf2}
	Suppose $\psi(s)=c_0s+\varphi(s)\in\mathfrak{G}_{\geq1}$ and that $C_\psi$ is a compact operator on $\mathcal{H}^2$. Then, $C_\psi$ is compact on $\mathcal{H}_{\Lambda}^2$.
\end{theorem}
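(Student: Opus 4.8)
The plan is to exploit the multiplicative structure of $\mathbb{N}_{p,q}$ to split $\mathcal{H}_\Lambda^2$ into orthogonal copies of $\mathcal{H}^2$ indexed by the $q$-part of a generalized integer, and to show that $C_\psi$ acts on this grading as a ``weighted shift'' of genuinely compact blocks whose norms tend to $0$. For a multi-index $b$ write $q^b=\prod_k q_k^{b_k}$ and let $V_b\colon\mathcal{H}^2\to\mathcal{H}_\Lambda^2$, $V_bg=(q^b)^{-s}g$. By the uniqueness of the factorization $\lambda=p^aq^b$, each $V_b$ is an isometry, the ranges $\mathcal{H}_b:=V_b\mathcal{H}^2$ are pairwise orthogonal, and $\mathcal{H}_\Lambda^2=\bigoplus_b\mathcal{H}_b$. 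Since $\psi=c_0s+\varphi$ with $c_0\in\mathbb{Z}_{\ge0}$ and $\varphi$ an ordinary Dirichlet series, the identity $(q^b)^{-\psi(s)}=(q^{c_0b})^{-s}(q^b)^{-\varphi(s)}$ yields
\begin{equation*}
C_\psi V_b=V_{c_0b}\,T_b,\qquad T_b:=M_{\omega_b}C_\psi,\quad \omega_b:=(q^b)^{-\varphi},
\end{equation*}
where $M_{\omega_b}$ is multiplication by $\omega_b$ on $\mathcal{H}^2$. Because $\varphi(\mathbb{C}_0)\subset\mathbb{C}_0$ we have $\lvert\omega_b\rvert=\exp(-\mu_b\Re\varphi)\le1$ on $\mathbb{C}_0$, with $\mu_b:=\log q^b$; hence $\omega_b\in\mathcal{H}^\infty$ is a multiplier of norm at most $1$, and $T_b=M_{\omega_b}C_\psi$ is compact as a product of a bounded operator with the compact operator $C_\psi$. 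Since $b\mapsto c_0b$ is injective, $C_\psi$ sends the block $\mathcal{H}_b$ into the block $\mathcal{H}_{c_0b}$.

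Next I would reduce the theorem to a single decay estimate. Let $P_b$ denote the orthogonal projection onto $\mathcal{H}_b$ and, for a finite set $\mathcal{F}$ of multi-indices, set $Q_{\mathcal{F}}=\sum_{b\in\mathcal{F}}P_b$. Then $C_\psi Q_{\mathcal{F}}=\sum_{b\in\mathcal{F}}V_{c_0b}T_bP_b$ is a finite sum of compact operators, hence compact. Using the orthogonality of the target blocks $\mathcal{H}_{c_0b}$, one gets for $F=\sum_{b\notin\mathcal{F}}V_bg_b$
\begin{equation*}
\norm{C_\psi F}_{\mathcal{H}_\Lambda^2}^2=\sum_{b\notin\mathcal{F}}\norm{T_bg_b}_{\mathcal{H}^2}^2\le\Big(\sup_{b\notin\mathcal{F}}\norm{T_b}\Big)^2\norm{F}_{\mathcal{H}_\Lambda^2}^2,
\end{equation*}
so that $\norm{C_\psi-C_\psi Q_{\mathcal{F}}}\le\sup_{b\notin\mathcal{F}}\norm{T_b}$. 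As $\{b:\mu_b\le R\}$ is finite for each $R$, it then suffices to prove that $\norm{T_b}\to0$ as $\mu_b\to\infty$; in that case $C_\psi$ is a norm limit of compact operators and is therefore compact.

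The heart of the matter is the estimate $\norm{T_b}\to0$, and here the compactness of $C_\psi$ on $\mathcal{H}^2$—rather than mere boundedness—enters. Passing to the Bohr lift $\mathcal{B}\colon\mathcal{H}^2\to H^2(\mathbb{T}^\infty)$, set $E=\{\Re(\mathcal{B}\varphi)=0\}\subset\mathbb{T}^\infty$. Testing $C_\psi$ on the orthonormal, hence weakly null, sequence $\{n^{-s}\}$, and using that multiplication by the monomial $(n^{c_0})^{-s}$ is isometric, one computes
\begin{equation*}
\norm{C_\psi n^{-s}}_{\mathcal{H}^2}^2=\norm{n^{-\varphi}}_{\mathcal{H}^2}^2=\int_{\mathbb{T}^\infty}e^{-2(\log n)\Re(\mathcal{B}\varphi)}\,dm\xrightarrow[n\to\infty]{}m(E).
\end{equation*}
Since $C_\psi$ is compact it maps the weakly null sequence $\{n^{-s}\}$ to a norm null one, forcing $m(E)=0$. (This simultaneously rules out the degenerate case $\varphi\equiv i\tau$, for which $C_\psi$ is an isometry and hence not compact.) Consequently, for every fixed $F\in\mathcal{H}^2$, dominated convergence gives $\norm{\omega_bF}_{\mathcal{H}^2}^2=\int_{\mathbb{T}^\infty}e^{-2\mu_b\Re(\mathcal{B}\varphi)}\lvert\mathcal{B}F\rvert^2\,dm\to\int_E\lvert\mathcal{B}F\rvert^2\,dm=0$ as $\mu_b\to\infty$.

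Finally I would upgrade this pointwise decay to the uniform bound. As $C_\psi$ is compact, the image $C_\psi(B)$ of the unit ball is relatively compact, so for $\varepsilon>0$ it admits a finite $\varepsilon$-net $F_1,\dots,F_N$. Using $\norm{M_{\omega_b}}\le1$ and $\norm{T_b}=\sup_{F\in C_\psi(B)}\norm{\omega_bF}_{\mathcal{H}^2}$, a three-term estimate gives $\limsup_{\mu_b\to\infty}\norm{T_b}\le\varepsilon+\max_j\limsup_{\mu_b\to\infty}\norm{\omega_bF_j}_{\mathcal{H}^2}=\varepsilon$, whence $\norm{T_b}\to0$. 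Together with the reduction above, this establishes compactness of $C_\psi$ on $\mathcal{H}_\Lambda^2$. I expect the main obstacle to be exactly the step converting compactness on $\mathcal{H}^2$ into control of the boundary contact set $E$: one must justify, in the $\mathbb{T}^\infty$ model, that $n^{-\varphi}$ (equivalently $\omega_b$) has boundary modulus $e^{-(\log n)\Re(\mathcal{B}\varphi)}$ and that $\Re(\mathcal{B}\varphi)$ is a bona fide boundary function, which relies on $\varphi$ having nonnegative real part and thus lying in the relevant Nevanlinna/Hardy class; once this is in place, the extraction of $m(E)=0$ and the concluding dominated-convergence argument run in parallel.
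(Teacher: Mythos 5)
Your decomposition $\mathcal{H}_\Lambda^2=\bigoplus_b\mathcal{H}_b$ and the factorization $C_\psi V_b=V_{c_0b}M_{\omega_b}C_\psi$ are exactly the paper's starting point (its Proposition on the blocks $b_k^{-s}\mathcal{H}^2$ and the orthogonal decomposition $C_\psi=\bigoplus_k C_{\psi,k}$), and your observation that each block is compact because it is a bounded multiplier composed with the compact $C_\psi$ matches the paper's treatment of that half of the argument. Where you genuinely diverge is in the key decay estimate $\norm{T_b}\to0$. The paper's trick is to use the ergodic theorem to write $\norm{C_\psi(b_k^{-s}f)}_{\mathcal{H}_\Lambda^2}^2$ as a mean of $b_k^{-2\Re\psi_{\chi_0}(it)}\left|B(C_\psi(f))\right|^2$ and then observe that replacing $b_k$ by the ordinary integer $\lfloor b_k\rfloor\le b_k$ only increases the integrand, giving $\norm{C_\psi(b_k^{-s}f)}_{\mathcal{H}_\Lambda^2}\le\norm{C_\psi(\lfloor b_k\rfloor^{-s}f)}_{\mathcal{H}^2}$; since $\lfloor b_k\rfloor^{-s}f_k$ is a weakly null sequence of ordinary Dirichlet series, compactness of $C_\psi$ on $\mathcal{H}^2$ finishes the argument in a few lines. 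You instead isolate the boundary contact set $E=\{\Re(\mathcal{B}\varphi)=0\}$, prove $m_\infty(E)=0$ by testing $C_\psi$ on the weakly null monomials $n^{-s}$, and then combine dominated convergence with a finite $\varepsilon$-net of the relatively compact set $C_\psi(B)$. Both routes are correct and both ultimately feed the same input (compactness applied to a weakly null sequence in $\mathcal{H}^2$) into the machine; yours is longer but exhibits the intrinsic obstruction $m_\infty(E)$, which connects naturally to the ``unrestricted range'' discussion at the end of the paper, while the paper's floor-function comparison sidesteps all boundary-value analysis. The one point you must still nail down --- and you correctly flag it --- is the identity $|\mathcal{B}(n^{-\varphi})|=e^{-(\log n)\Re(\mathcal{B}\varphi)}$ almost everywhere, with a single boundary function $\Re(\mathcal{B}\varphi)$ serving all $n$ and all $b$ simultaneously; this needs the almost-everywhere radial limit theorem for $\mathcal{H}^\infty$ and its compatibility with the multiplicativity of the Bohr lift, which is the same machinery the paper invokes when it uses the boundary values $\psi_\chi(it)$.
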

In Section~\ref{sddp}, we work on the compactness of composition operators on the Hardy space $\mathcal{H}^2$. O. F. Brevig and K--M. Perfekt \cite{BP21} characterized compact composition operators on $\mathcal{H}^2$ with symbols in $\mathfrak{G}_0$. For symbols $\psi(s)=c_0s+\varphi(s)\in\mathfrak{G}_{\geq1}$, F. Bayart \cite{BAY21} gave the following sufficient condition for the composition operator $C_\psi$ to be compact
\begin{equation}\label{condition}
\lim\limits_{\Re w\rightarrow0^+}\frac{\mathcal{N}_{\psi}(w)}{\Re w}=0,
\end{equation}
where the Nevanlinna-type counting function $\mathcal{N}_{\psi}$ is defined as
$$\mathcal{N}_{\psi}(w)=\sum\limits_{\substack{s\in\psi^{-1}(\{w\})\\\Re s>0}}\Re s.$$

Conversely, Bailleul \cite{BL15} for finitely valent symbols, where $\phi$ is supported on a finite set of primes and Brevig and Perfekt \cite{BP20} under the assumption that $\phi$ is supported on single prime, proved that \eqref{condition} is also necessary for the composition operator $C_\psi$ to be compact. We say that a Dirichlet series $\phi$ is supported on a set of primes $\mathbb{P}$ if 
\begin{equation*}
\phi(s)=\sum\limits_{\substack{p|n\\ p\in\mathbb{P}}}\frac{a_n}{n^s}.
\end{equation*}

Our next result is a necessary condition without any additional assumption on the symbol $\psi\in\mathfrak{G}_{\geq1
}$. Specifically, we replace pointwise convergence in \eqref{condition} with  $L^1(\mathbb{T}^\infty)$ convergence. This answers a question posed by F. Bayart \cite[Question~3.6]{BAY21}.
\begin{theorem}\label{characterization}
	Suppose a symbol $\psi\in\mathfrak{G}_{\geq1}$ induces a  compact composition operator $C_\psi$ on $\mathcal{H}^2$. Then
	\begin{equation}\label{avcond}
	\lim\limits_{\Re w\rightarrow0}\int\limits_{\mathbb{T}^\infty}\frac{\mathcal{N}_{\psi_\chi}(w)}{\Re w}\,dm_\infty(\chi)=0.
	\end{equation}
\end{theorem}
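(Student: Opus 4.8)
The plan is to reduce \eqref{avcond} to a single scalar estimate on the \emph{mean counting function}
\[
\mathcal{M}_\psi(w):=\int_{\mathbb{T}^\infty}\mathcal{N}_{\psi_\chi}(w)\,dm_\infty(\chi),
\]
namely $\mathcal{M}_\psi(w)=o(\Re w)$ as $\Re w\to 0$, and then to prove this scalar estimate by a Stanton-type (Littlewood--Paley) identity combined with a Shapiro-style test-function argument. Since $c_0\geq 1$, every vertical limit $\psi_\chi(s)=c_0s+\varphi_\chi(s)$ is a genuine analytic self-map of $\mathbb{C}_0$, because $\Re\psi_\chi=c_0\Re s+\Re\varphi_\chi>0$ there; hence each $\mathcal{N}_{\psi_\chi}$ is a bona fide half-plane Nevanlinna counting function, and the asserted limit is exactly $\mathcal{M}_\psi(w)/\Re w\to0$.

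First I would pass to the enlarged space. By Theorem~\ref{suf2}, compactness of $C_\psi$ on $\mathcal{H}^2$ upgrades to compactness on $\mathcal{H}_{\Lambda}^2$, and I work there on purpose: a single generalized prime $q$ supplies a disk variable $u=q^{-s}$ which, by the $\mathbb{Q}$-linear independence of $\log q$ from $\{\log p_n\}$, is completely decoupled from the symbol $\psi$ (which involves only classical primes). This decoupling is what lets me import the one-variable half-plane machinery while still recovering the full average over the $p$-torus, since the $\mathcal{H}_{\Lambda}^2$-norm integrates over the entire polytorus.

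The analytic core is a change-of-variables identity. Writing the $\mathcal{H}_{\Lambda}^2$-norm as an integral over the polytorus (Bohr lift / Kronecker flow) and applying the classical half-plane Stanton formula to each twisted self-map $\psi_\chi$, I expect an identity of the shape
\[
\norm{C_\psi f}_{\mathcal{H}_{\Lambda}^2}^2=|f(+\infty)|^2+\frac{2}{\pi}\int_{\mathbb{C}_0}|f'(w)|^2\,\mathcal{M}_\psi(w)\,dA(w)
\]
for $f$ in a dense class built from $u=q^{-s}$; the $m_\infty$-average over $\mathbb{T}^\infty$ appears automatically, because composing $F(q^{-s})$ with $\psi$, expanding in powers of $q^{-s}$, and integrating over the $p$-torus exhibits precisely the counting function of $\psi$ averaged over all its vertical limits $\psi_\chi$. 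I would then choose normalized, reproducing-kernel-type test functions $f_a$ of $u=q^{-s}$ whose derivative mass $|f_a'|^2$ concentrates at an interior point $a\in\mathbb{C}_0$ with $\Re a\to0$, so that $f_a\to 0$ weakly and $\int_{\mathbb{C}_0}|f_a'|^2\,dA\asymp 1/\Re a$. Inserting $f_a$ into the identity yields $\norm{C_\psi f_a}^2\gtrsim \mathcal{M}_\psi(a)/\Re a$; since $C_\psi$ is compact on $\mathcal{H}_{\Lambda}^2$ it sends the weakly null family $\{f_a\}$ to a norm-null family, forcing $\mathcal{M}_\psi(a)/\Re a\to0$, which is \eqref{avcond}.

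I expect the main obstacle to be the Stanton identity in infinite dimensions: justifying the interchange of the polytorus average with the half-plane area integral, controlling the boundary behaviour of the twists $\psi_\chi$ (which exist only as almost-everywhere vertical limits), and verifying that the generalized-prime test functions genuinely produce the \emph{mean} counting function $\mathcal{M}_\psi$ rather than an uncontrolled pointwise quantity. Equally delicate is the localization step, where I must guarantee that $|f_a'|^2$ concentrates sharply enough at $a$ to extract the pointwise quotient $\mathcal{M}_\psi(a)/\Re a$ while keeping the $\mathcal{H}_{\Lambda}^2$-norms bounded and the family weakly null; it is precisely the decoupling afforded by the $\mathbb{Q}$-independent generalized prime, together with Theorem~\ref{suf2}, that I anticipate will make both steps tractable.
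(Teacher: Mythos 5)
Your strategy coincides with the paper's: pass to $\mathcal{H}_{\Lambda}^2$ via Theorem~\ref{suf2}, exploit a generalized prime $q$ on which $\psi$ does not depend to build reproducing-kernel test functions in the single variable $q^{-s}$, separate the kernel from the counting function in Stanton's formula \eqref{staton} by Fubini (the inner polytorus average then produces exactly $\mathcal{M}_\psi$), and let compactness kill the norms of the weakly null family. This is precisely how the paper proves Theorem~\ref{suf1} (with the classical prime $2$ in place of $q$) and then deduces Theorem~\ref{characterization}.

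There is, however, a genuine gap at the step you yourself flag as delicate, namely the passage from ``$|f_a'|^2$ concentrates near $a$'' to the pointwise lower bound $\norm{C_\psi f_a}^2\gtrsim \mathcal{M}_\psi(a)/\Re a$. Concentration of derivative mass only bounds $\norm{C_\psi f_a}^2$ from below by an \emph{average} of $\mathcal{M}_\psi$ over a neighbourhood of $a$ (in fact, after Parseval the kernel factor depends only on $\Re w$, so a priori you only control an average over a short interval of $\sigma=\Re w$). To convert that average into the value at $a$ you need two ingredients that your sketch does not supply: first, the rotation-invariance identity of Lemma~\ref{tonellitrick}, which shows that $\mathcal{M}_\psi(w)$ depends only on $\Re w$ and lets one insert the missing $\Im w$-integration for free, turning the one-dimensional $\sigma$-average into a genuine disk average; and second, the weak submean value property $\mathcal{M}_\psi(w)\leq \frac{C}{|D(w,r)|}\int_{D(w,r)}\mathcal{M}_\psi\,dA$ of Theorem~\ref{submean}, which is what finally bounds the disk average from below by $\mathcal{M}_\psi(a)$. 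The latter is the technical heart of the paper (Section~\ref{submeanv}): it is proved by conformally mapping rectangles in $\mathbb{C}_0$ to the disk, controlling distortion via the Koebe quarter theorem and the Kellogg--Warschawski theorem, and invoking Shapiro's submean value property for bounded-range symbols. Without these two lemmas the inequality $\norm{C_\psi f_a}^2\gtrsim \mathcal{M}_\psi(a)/\Re a$ does not follow, so your outline, while pointed in the right direction, is not yet a proof.
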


The classical technique for proving such necessary conditions for compactness goes through the submean value property of the associated counting function and the behavior of the reproducing  kernels near the boundary, see for example \cite{SHAP87}. In Section~\ref{submeanv} we prove the weak submean value property for the average counting function $\int\limits_{\mathbb{T}^\infty}N_{\psi_\chi}(w)\,dm_\infty(\chi)$, Theorem~\ref{submean}. Using geometric function theory results, related to the distortion and the boundary behavior of conformal maps, we will be able to transfer our notions to the disk setting. The weak submean value property will then follow by classical results due to Shapiro \cite{SHAP87}.

The  main difficult in our setting is that reproducing kernels, $k_w(s)=\zeta(\overline{w}+s)$, on $\mathcal{H}^2$ are well defined only for points $w\in\mathbb{C}_{\frac{1}{2}}$.

F. Bayart have found an Example~\ref{exmp}, of a non-compact and bounded composition operator with symbol in $\mathfrak{G}_{\geq 1}$, that satisfies \eqref{avcond}. Theorem \ref{characterization} gives us the $L^1(\mathbb{T}^\infty)$ convergence of the quantity $\mathcal{N}_{\psi}(w)(\Re w)^{-1}\rightarrow 0$. It may be a step closer, but the characterization of compact composition operators, with symbols in $\mathfrak{G}_{\geq 1}$, remains an open problem \cite{BAY03, BAY21}.

For our purposes it was enough to study composition operators $C_\psi$ with symbols in the class $\mathfrak{G}_{\geq1}$. It would be interesting to have a characterization of the symbols that induce bounded composition operators on $\mathcal{H}^2_{\Lambda}$. 

\subsection*{Acknowledgments}
I would like to thank my supervisor, Karl--Mikael Perfekt, for his constant support and guidance. I am indebted to him and Ole Fredrik Brevig for the idea to add Beurling primes to the structure of the Hardy spaces of Dirichlet series.
Also, I would like to extend my gratitude to Frédéric Bayart for letting me include his Example~\ref{exmp} and for our fruitful mathematical conversations during my research visit at the Laboratoire de Mathématiques Blaise Pascal, Clermont-Ferrand. 

Part of the work has been conducted during a research visit at the Department of Mathematics in the Aristotle University of Thessaloniki.
\subsection*{Notation}
Throughout the article, we will be using the convention that $C$ denotes a positive constant which may vary from line to line. We will write that $C = C(\Omega)$ to indicate that the constant depends on a parameter $\Omega$. 
	\section{Background material}
\subsection{Composition operators in the disk setting}
The classical Hardy spaces $H^2$ consists of all holomorphic functions in the unit disk with square summable Taylor coefficients
$$H^2=\left\{f(s)=\sum_{n\geq0}a_nz^n:\norm{f}_{H^2}^2=\sum_{n\geq0}|a_n|^2<+\infty \right\}.$$

By the Littlewood subordination principle \cite{LIT25}, every holomorphic self-map of the unit disk, $\phi$, induces a bounded composition operator on $H^2$.
J. Shapiro in his seminal paper \cite{SHAP87} characterized the compact composition operator $C_\phi$ in terms of the Nevanlinna counting function
 $$N_{\phi}(z)=\sum\limits_{z_i\in \phi^{-1}(\{z\})}\log\frac{1}{|z_i|}, \qquad z \neq \phi(0).$$

The composition operator $C_\phi$ is compact on $H^2$ if and only if
\begin{equation}\label{eq:chunit}
\lim\limits_{|z|\rightarrow 1^-}\frac{N_\phi(z)}{\log\frac{1}{|z|} }=0.
\end{equation}

In order to prove the above theorem, J. Shapiro makes use of the Littlewood--Paley and the Stanton  formulae for the norm of a function $f\in H^2$ and its image $C_\phi(f)$, respectively.
\begin{equation}
\norm{f}_{H^2}^2=|f(0)|^2+\frac{2}{\pi}\int\limits_{\mathbb{D}}|f'(z)|^2\log\frac{1}{|z|}\,dA(z).
\end{equation}

\begin{equation}
\norm{C_\phi(f)}_{H^2}^2=|f\circ\phi(0)|^2+\frac{2}{\pi}\int\limits_{\mathbb{D}}|f'(z)|^2N_\phi(z)\,dA(z),
\end{equation}
where $dA(z)=dx\,dy,\,z=x+iy$, is the area measure.
\subsection{The infinite polytorus and vertical limits}
The infinite polytorus $\mathbb{T}^\infty$ is the countable infinite Cartesian product of copies of the unit  circle $\mathbb{T}$,
$$\mathbb{T}^\infty=\left\{\chi=(\chi_1,\chi_2,\dots): \,\chi_j\in\mathbb{T},\, j\geq 1\right\}.$$
As a compact abelian group with respect to coordinate-wise multiplication it posses a unique Haar measure $m_\infty$ \cite{R90}. We can identify the measure $m_\infty$ with the countable infinite product measure $m\times m\times\cdots$, where $m$ is the normalized Lebesgue measure of the unit circle.

The $\mathbb{Q}$-linear independence of the set $\{\log(p_n)\}_{n\geq 1}\cup \{\log(q_n)\}_{n\geq 1} $, implies that $\mathbb{T}^\infty$ is isomorphic to the group of characters of $((\mathbb{Q}_{p,q})_+,\cdot)$, where $(\mathbb{Q}_{p,q})_+$ are the fractions of $(\mathbb{N}_{p,q},\cdot)$. Given a point $\chi=(\chi_1,\chi_2,\dots)\in\mathbb{T}^\infty$, the corresponding character $\chi:(\mathbb{Q}_{p,q})_+\rightarrow\mathbb{T}$ is the completely multiplicative function on $\mathbb{N}_{p,q}$ such that $\chi(p_j)=\chi_{2j}$, $\chi(q_j)=\chi_{2j-1}$, extended to $(\mathbb{Q}_{p,q})_+$ through the relation $\chi(\lambda_n^{-1})=\overline{\chi(\lambda_n)}$. From now on we identify a point $\chi=(\chi_1,\dots)\in\mathbb{T}^\infty$ with the corresponding character $\chi(\lambda_n)$.

Suppose $f(s)=\sum\limits_{n\geq 1}\frac{a_n}{\lambda_n^s}$ and  $\chi(\lambda_n)$ is a character. The vertical limit function $f_\chi$ is defined as
$$f_\chi(s)=\sum\limits_{n\geq 1}\frac{a_n}{\lambda_n^s}\chi(\lambda_n).$$
Kronecker's theorem \cite{BOH34} justifies the name, since for any $\epsilon>0$, there exists a sequence of real numbers $\{t_j\}_{j\geq1}$ such that $f(s+it_j)\rightarrow f_\chi(s)$ uniformly on $\mathbb{C}_{\sigma_u(f)+\epsilon}$. The abscissae of convergence are defined likewise with the theory of Dirichlet series.
\begin{align*}
&\sigma_c(f)=\inf\left\{\sigma\in\mathbb{R}:\,f(s)=\sum\limits_{n\geq 1}\frac{a_n}{\lambda_n^\sigma}\qquad\text{converges}\right\},\\
&\sigma_a(f)=\inf\left\{\sigma\in\mathbb{R}:\,f(s)=\sum\limits_{n\geq 1}\frac{|a_n|}{\lambda_n^\sigma}\qquad\text{converges}\right\},\\
&\sigma_u(f)=\inf\left\{\sigma\in\mathbb{R}:\,f(s)=\sum\limits_{n\geq 1}\frac{a_n}{\lambda_n^s}\qquad\text{converges uniformly in}\qquad\mathbb{C}_\sigma \right\}.
\end{align*}
For a symbol $\psi(s)=c_0s+\varphi(s)\in\mathfrak{G}$ we set
$$\psi_\chi(s)=c_0s+\varphi_\chi(s),$$
and we observe that for every $\chi\in\mathbb{T}^\infty$ and $f\in\mathcal{H}_{\Lambda}^2$,
\begin{equation} \label{eq:comprule}
\left(C_\psi(f)\right)_\chi=f_{\chi^{c_0}}\circ\psi_\chi.
\end{equation}

Note that for a Dirichlet series $f\in\mathcal{H}^2\subset\mathcal{H}_{\Lambda}^2$ the vertical limit function has the form 
$$f_\chi(s)=\sum\limits_{n\geq1}\frac{a_n\chi(n)}{n^s},$$
where the character $\chi(n)$ exists in the dual group of $(\mathbb{Q}_+,\cdot)$, which is also isomorphic to $\mathbb{T}^\infty$.
\subsection{Hardy spaces on the infinite polytorus and the Bohr-lift}
We will make a short presentation of those topics and we refer to \cite{DGD19,HLS97,QQ20} for further information. For our purposes it would be enough to define only the spaces $H^2(\mathbb{T}^\infty)$ and $H^\infty(\mathbb{T}^\infty)$, but for expository reasons we will consider $1\leq p\leq \infty$. Let us first recall what happens in one dimension. The Hardy space $H^p,\,1\leq p\leq \infty$ consists of all functions in $L^p(\mathbb{T}, dm)$ with vanishing negative Fourier coefficients. 
The Fourier coefficient of a function $g\in L^1(\mathbb{T}^\infty)$ at a sequence $a=(a_1,a_2,\dots)\in \mathbb{Z}_0^\infty$  is defined as
$$\widehat{g}(a)=\int\limits_{\mathbb{T}^\infty}g(z)z^{-a}\,dm_\infty(z),$$
where $\mathbb{Z}_0^\infty$ is the set of all compactly supported sequences with integer terms and 
$$z^a=z_1^{a_1}\cdot z_2^{a_2}\cdot\dots,$$ is the multi-index notation. Similarly, we will denote by $\mathbb{N}_0^\infty$ the set of all compactly supported sequences of non-negative integers.

In a similar manner to the unit circle, the Hardy space $H^p(\mathbb{T}^\infty),\,1\leq p\leq \infty$ is defined as the subspace of  $L^p(\mathbb{T}^\infty)$, which contains all the functions with vanishing Fourier coefficients at sequences in $\mathbb{Z}_0^\infty\setminus\mathbb{N}_0^\infty$. 

By the definition of $\mathbb{N}_{p,q}=\{\lambda_n\}_{n\geq 1}$, for every $n\in\mathbb{N}$ there exist two unique sequences in $\mathbb{N}_0^\infty$, $\gamma_p(\lambda_n)$ and $\gamma_q(\lambda_n)$, such that 
$$\lambda_n=p^{\gamma_p(\lambda_n)}q^{\gamma_q(\lambda_n)}.$$

Starting with a generalized Dirichlet polynomial $f(s)=\sum\limits_{n\geq1}\frac{a_n}{\lambda_n^s}$ and mapping its prime term to a new variable, in the following way
$$p_i^{-s}\mapsto \chi_{2j},\qquad q_i^{-s}\mapsto \chi_{2j-1},\qquad i\in\mathbb{N},$$ 
we define the Bohr-lift of $f$ as
\begin{equation}\label{bohrlift}
B(f):=\sum\limits_{n\geq 1}a_n\chi(\lambda_n).
\end{equation}

The Bohr-lift is an isometric isomorphism between $\mathcal{H}_{\Lambda}^2$ and $H^2(\mathbb{T}^\infty)$. It is also, a norm preserving homeomorphism from $\mathcal{H}^\infty$ into $H^\infty(\mathbb{T}^\infty)$, see for example \cite{HLS97}. By $\mathcal{H}^\infty$ we denote  the space of all bounded Dirichlet series in $\mathbb{C}_0$, equipped with the uniform norm.

By Carleson theorem for $H^2(\mathbb{T}^\infty)$ \cite[Theorem~1.5]{HS03}, for every $f\in\mathcal{H}_{\Lambda}^2$ the series $B(f):=\sum\limits_{n\geq 1}a_n\chi(\lambda_n)$ converges for almost every character $\chi\in\mathbb{T}^\infty$. 

Thus, for every $f\in \mathcal{H}_{\Lambda}^2$  and for almost every $\chi\in \mathbb{T}^\infty$, we have that
$$\sigma_c(f_\chi)\leq 0.$$

\subsection{The ergodic theorem}
It is known \cite[Section~2.2]{QQ20} that given a sequence $\{a_n\}_{n\geq1}$ of $\mathbb{Q}$-linear independent real numbers, then the Kronecker flow $\{T_t\}_{t\in\mathbb{R}}$ is ergodic, where
\begin{equation}
T_t(\chi_1,\chi_2,\dots)=(e^{-ita_1}\chi_1,e^{-ita_2}\chi_2,\dots).
\end{equation}
By Birkhoff--Khinchin ergodic theorem, we obtain the following.
\begin{theorem}[\cite{CFS82,QQ20}]\label{ergodic}
If $g\in L^1(\mathbb{T}^\infty)$, then for almost every $\chi_0\in\mathbb{T}^\infty$,
	\begin{equation}\label{er1}
	\lim\limits_{T\rightarrow+\infty}\frac{1}{2T}\int\limits_{-T}^{T}g\left(T_t\chi_0\right)\,dt=\int\limits_{\mathbb{T}^\infty}g(\chi)\,dm_\infty(\chi).
	\end{equation}
	If $g$ is continuous, then \eqref{er1} holds for every  $\chi_0\in\mathbb{T}^\infty$.
\end{theorem}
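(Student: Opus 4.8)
The plan is to treat the two assertions separately: the almost-everywhere statement for $g\in L^1(\mathbb{T}^\infty)$ follows from the classical pointwise ergodic theorem once ergodicity is in hand, while the pointwise statement for \emph{continuous} $g$ requires the stronger fact that the Kronecker flow is uniquely ergodic, which I would establish by an explicit computation on characters.

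First I would record that $\{T_t\}_{t\in\mathbb{R}}$ is a measure-preserving flow: each $T_t$ acts coordinatewise as a rotation of the circle, so by translation invariance of the Haar measure on each factor it preserves the product measure $m_\infty$, and $T_{t+s}=T_tT_s$ with $T_0=\mathrm{id}$. Ergodicity (cited as known from \cite{QQ20}) I would justify via Fourier analysis on $\mathbb{T}^\infty$: if $g\in L^2(\mathbb{T}^\infty)$ is invariant under the flow, then computing Fourier coefficients in $g\circ T_t=g$ forces $e^{-it\langle a,\alpha\rangle}\widehat{g}(\alpha)=\widehat g(\alpha)$ for all $t$ and all $\alpha\in\mathbb{Z}_0^\infty$, where $\langle a,\alpha\rangle=\sum_j a_j\alpha_j$. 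Since $\langle a,\alpha\rangle\neq0$ whenever $\alpha\neq0$, by the $\mathbb{Q}$-linear independence of $\{a_n\}$, only $\widehat g(0)$ can survive, so $g$ is constant.

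With ergodicity established, the $L^1$ statement is exactly the Birkhoff--Khinchin pointwise ergodic theorem for flows. The two-sided average presents no difficulty: writing $\frac{1}{2T}\int_{-T}^T=\frac12\big(\frac1T\int_0^T g(T_t\chi)\,dt+\frac1T\int_0^T g(T_{-t}\chi)\,dt\big)$, the second integral is the forward average for the reversed flow $t\mapsto T_{-t}$, which is equally measure-preserving and ergodic. Each one-sided average converges a.e.\ to $\int_{\mathbb{T}^\infty}g\,dm_\infty$, and hence so does their mean on the full-measure intersection.

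For continuous $g$ the a.e.\ statement is not enough, and here lies the main point. I would first verify the conclusion for the characters $\chi\mapsto\chi^\alpha$, $\alpha\in\mathbb{Z}_0^\infty$, which are eigenfunctions of the flow: $(T_t\chi)^\alpha=e^{-it\langle a,\alpha\rangle}\chi^\alpha$, so that
$$\frac{1}{2T}\int_{-T}^T(T_t\chi)^\alpha\,dt=\chi^\alpha\cdot\frac{\sin\big(T\langle a,\alpha\rangle\big)}{T\langle a,\alpha\rangle}\longrightarrow 0=\int_{\mathbb{T}^\infty}\chi^\alpha\,dm_\infty\qquad(T\to\infty)$$
for every $\chi$ when $\alpha\neq0$ (using $\langle a,\alpha\rangle\neq0$), while the case $\alpha=0$ is trivial. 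By linearity the conclusion holds for every trigonometric polynomial and every $\chi_0$. Since trigonometric polynomials are dense in $C(\mathbb{T}^\infty)$ by Stone--Weierstrass, a standard $\varepsilon/3$ approximation---bounding the ergodic averages uniformly by $\norm{\cdot}_\infty$---transfers the convergence to every continuous $g$ and every $\chi_0$. I expect the only genuine obstacle to be precisely this upgrade from almost-every to every starting point, which is why the eigenfunction computation together with uniform density, rather than a direct appeal to Birkhoff's theorem, is the mechanism that does the work.
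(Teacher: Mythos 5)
Your argument is correct and is essentially the route the paper itself relies on: the paper states this result as a citation to the Birkhoff--Khinchin ergodic theorem together with the known ergodicity of the Kronecker flow (which follows from the $\mathbb{Q}$-linear independence exactly as in your Fourier-coefficient computation), and your treatment of the continuous case via the eigenfunction identity $(T_t\chi)^\alpha=e^{-it\langle a,\alpha\rangle}\chi^\alpha$ plus Stone--Weierstrass is the standard unique-ergodicity argument behind the ``for every $\chi_0$'' clause. No gaps; you have simply supplied the details the paper delegates to \cite{CFS82,QQ20}.
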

Consequently, for every $f\in\mathcal{H}_{\Lambda}^2$ and for almost every character $\chi_0(\lambda_n)$
\begin{equation}\label{er2}
\lim\limits_{T\rightarrow+\infty}\frac{1}{2T}\int\limits_{-T}^{T}\left|f_{\chi_0}(it)\right|^2\,dt=\int\limits_{\mathbb{T}^\infty}\left|B(f)\right|^2\,dm_\infty\left(\chi\right).
\end{equation}
\subsection{The Littlewood--Paley and Stanton's formulae}
As in \cite[Lemma~2]{BAY03}, for every $f\in\mathcal{H}_{\Lambda}^2$ and $T>0$, we have the following Littlewood--Paley formula
\begin{equation}\label{litpal}
\norm{f}_{\mathcal{H}_{\Lambda}^2}^2=|f(+\infty)|^2+\frac{2}{T}\int\limits_{\mathbb{T}^\infty}\int\limits_{0}^{\infty}\int\limits_{-T}^{T}\left|f'_{\chi}(\sigma+it)\right|^2\sigma \,dt\,d\sigma \,dm_\infty(\chi).
\end{equation}
Suppose $\psi(s)=c_0s+\varphi(s) \in\mathfrak{G}_{\geq 1}$, by a non-injective change of variables \cite{SHAP87},
\begin{equation}\label{staton}
\norm{C_\psi(f)}^2=|f(+\infty)|^2+\frac{2}{\pi}\int\limits_{\mathbb{C}_0}\int\limits_{\mathbb{T}^\infty}\left|f_{\chi^{c_0}}'(w)\right|^2\mathcal{N}_{\psi_\chi}(w,T)  \,dm_\infty(\chi) \,dA(w),
\end{equation}
where the counting function $\mathcal{N}_{\psi_{\chi}}(w,T)$ is defined as
$$\mathcal{N}_{\psi_{\chi}}(w, T)=\frac{\pi}{T}\sum\limits_{\substack{s\in\psi_{\chi}^{-1}(\{w\})\\
		|\Im s|<T\\
		\Re s>0}}\Re s.$$
\section{When do composition operators change adding primes?}
In this section we will study the behavior of a composition operator $C_\psi,\,\psi\in\mathfrak{G}_{\geq 1}$ on the space $\mathcal{H}^2_{\Lambda}$. Our approach has been inspired by results in \cite[Section 3]{BP20}.

Let $\mathbb{N}_q=\{b_k\}_{k\geq 1}$ be the increasing sequence of numbers that can be written as a finite product of terms of the set $\{q_n\}_{n\geq1}$.
We observe that $b_i^{-s}\mathcal{H}^2\perp b_j^{-s}\mathcal{H}^2$, when $b_i\neq b_j$. Thus, $\mathcal{H}_{\Lambda}^2$ has the following orthogonal decomposition
\begin{equation}
\mathcal{H}_{\Lambda}^2=\bigoplus\limits_{k\geq1}b_k^{-s}\mathcal{H}^2.
\end{equation}
\begin{proposition}
	Let $\psi(s)=c_0s+\varphi(s)\in\mathfrak{G}_{\geq1}$ and $k\in\mathbb{N}$. Then, the composition operator $C_\psi$ maps $b_k^{-s}\mathcal{H}^2$ into $b_k^{-c_0s}\mathcal{H}^2$ and its restriction $C_{\psi, k}$ to $b_k^{-s}\mathcal{H}^2$ has norm $\norm{C_\psi}=1$.
\end{proposition}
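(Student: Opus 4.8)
The plan is to base everything on one explicit factorisation of $C_\psi$ on the summand $b_k^{-s}\mathcal{H}^2$ and then read off the mapping property and the norm from it. Writing a generic element as $g=b_k^{-s}f$ with $f\in\mathcal{H}^2$, I would compute directly, using $b_k^{-\psi(s)}=b_k^{-c_0 s}\,b_k^{-\varphi(s)}$,
\[
C_\psi(g)=g\circ\psi=b_k^{-\psi}\,(f\circ\psi)=b_k^{-c_0 s}\,b_k^{-\varphi}\,C_\psi(f).
\]
The step that needs justification is that $b_k^{-\varphi}=\exp\!\bigl(-(\log b_k)\varphi\bigr)$ is a genuine Dirichlet series lying in $\mathcal{H}^\infty$: it is the exponential of a Dirichlet series, and since $\log b_k>0$ and $\varphi(\mathbb{C}_0)\subset\mathbb{C}_0$ we have $\bigl|b_k^{-\varphi(s)}\bigr|=b_k^{-\Re\varphi(s)}\le 1$ on $\mathbb{C}_0$. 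Because $C_\psi f\in\mathcal{H}^2$ by the Gordon--Hedenmalm theorem, the product $b_k^{-\varphi}C_\psi(f)$ is an $\mathcal{H}^\infty$-multiple of an $\mathcal{H}^2$ function and hence lies in $\mathcal{H}^2$; multiplying by the monomial $b_k^{-c_0 s}$ then places $C_\psi(g)$ in $b_k^{-c_0 s}\mathcal{H}^2$, which is the asserted mapping property.

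For the norm I would pass to isometric models. Multiplication by $b_k^{-s}$ and by $b_k^{-c_0 s}$ are surjective isometries onto $b_k^{-s}\mathcal{H}^2$ and $b_k^{-c_0 s}\mathcal{H}^2$ respectively, since each merely relabels the frequencies ($n\mapsto b_k n$, resp.\ $n\mapsto b_k^{c_0}n$) and preserves the coefficient norm; the relabelling is injective by unique factorisation in $\mathbb{N}_{p,q}$. Conjugating the displayed identity by these isometries exhibits $C_{\psi,k}$ as unitarily equivalent to the operator $M_{b_k^{-\varphi}}C_\psi$ acting on $\mathcal{H}^2$, where $M_{b_k^{-\varphi}}$ denotes multiplication by $b_k^{-\varphi}$. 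Consequently
\[
\norm{C_{\psi,k}}=\norm{M_{b_k^{-\varphi}}C_\psi}\le\norm{b_k^{-\varphi}}_{\mathcal{H}^\infty}\,\norm{C_\psi}_{\mathcal{H}^2}\le 1,
\]
using the bound $\norm{b_k^{-\varphi}}_{\mathcal{H}^\infty}\le 1$ from the first paragraph together with the fact, recalled in the introduction, that symbols in $\mathfrak{G}_{\ge 1}$ induce norm-one operators on $\mathcal{H}^2$.

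I expect the matching lower bound $\norm{C_{\psi,k}}\ge 1$ to be the main obstacle. The natural approach is to test the adjoint on the reproducing kernels $k_w(s)=\zeta(\overline{w}+s)$ of $\mathcal{H}^2$: since $C_\psi^\ast k_w=k_{\psi(w)}$ and $M_{b_k^{-\varphi}}^\ast k_w=\overline{b_k^{-\varphi(w)}}\,k_w$, one obtains
\[
\norm{\bigl(M_{b_k^{-\varphi}}C_\psi\bigr)^\ast k_w}=\bigl|b_k^{-\varphi(w)}\bigr|\,\norm{k_{\psi(w)}},
\]
so the normalised kernels yield the lower bound $\bigl|b_k^{-\varphi(w)}\bigr|\bigl(\zeta(2\Re\psi(w))/\zeta(2\Re w)\bigr)^{1/2}$, and one would try to drive this to $1$ by a suitable choice of $w$. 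The difficulty is precisely the one flagged in the introduction: the kernels are available only for $w\in\mathbb{C}_{1/2}$, whereas $|b_k^{-\varphi}|$ is closest to $1$ near $\partial\mathbb{C}_0$, and moreover $c_0\ge 1$ keeps $\Re\psi(w)\ge c_0\Re w$ away from $1/2$, so the kernel quotient stays bounded and does not by itself force the value $1$. In the degenerate case $\varphi\equiv i\tau$ the multiplier $b_k^{-i\tau}$ is unimodular and the equality $\norm{C_{\psi,k}}=\norm{C_\psi}=1$ is immediate, while for the trivial factor $b_k=1$ the restriction is $C_\psi$ itself; the general lower bound has to be extracted from the boundary behaviour of $\Re\varphi$ together with an extremal sequence for $C_\psi$ on $\mathcal{H}^2$, and this is where the real work lies.
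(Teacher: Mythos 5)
Your first two paragraphs are essentially the paper's own proof. The factorisation $C_\psi(b_k^{-s}f)=b_k^{-c_0s}\,b_k^{-\varphi}\,C_\psi(f)$, the fact that $b_k^{-\varphi}$ is a Dirichlet series in $\mathcal{H}^\infty$ with $\norm{b_k^{-\varphi}}_{\mathcal{H}^\infty}\leq 1$ because $\Re\varphi\geq 0$ on $\mathbb{C}_0$, and the resulting chain $\norm{C_{\psi,k}(b_k^{-s}f)}_{\mathcal{H}^2_\Lambda}\leq\norm{C_\psi(f)}_{\mathcal{H}^2}\leq\norm{b_k^{-s}f}_{\mathcal{H}^2_\Lambda}$ are exactly what the paper does; the only cosmetic difference is that the paper routes the multiplier estimate through the Bohr lift (using $B(mf)=B(m)B(f)$ and the isometry of $B$ on $\mathcal{H}^\infty$) rather than invoking the pointwise bound and Bohr's theorem directly. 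Either justification is fine.

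Your third paragraph, however, chases a statement that is neither proved in the paper nor true. The phrase ``has norm $\norm{C_\psi}=1$'' is admittedly awkward, but it cannot mean $\norm{C_{\psi,k}}=1$ for every $k$: in the proof of Theorem~\ref{suf2} the paper shows that $\norm{C_{\psi,k}}\rightarrow 0$ whenever $C_\psi$ is compact on $\mathcal{H}^2$, which would be impossible if each restriction had norm one. (Concretely, the estimate $\norm{C_{\psi,k}(b_k^{-s}f)}_{\mathcal{H}^2_\Lambda}\leq\norm{C_\psi(\lfloor b_k\rfloor^{-s}f)}_{\mathcal{H}^2}$ together with the weak nullity of $\lfloor b_k\rfloor^{-s}f$ forces the norms of the restrictions to decay.) All that is needed, and all the paper establishes, is the uniform bound $\norm{C_{\psi,k}}\leq 1$, combined with the observation that the summand corresponding to $b_k=1$ is $C_\psi$ acting on $\mathcal{H}^2$ itself, which has norm one by the Gordon--Hedenmalm theorem; the orthogonal direct sum in Corollary~\ref{corbdd} then has norm $\sup_k\norm{C_{\psi,k}}=1$, which is what Theorem~\ref{bddG1} requires. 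You make precisely this observation in passing (``for the trivial factor $b_k=1$ the restriction is $C_\psi$ itself''), so your argument is complete once you discard the attempted per-$k$ lower bound; the reproducing-kernel computation you sketch cannot succeed in general and should be abandoned.
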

\begin{proof}
First, we observe that
	\begin{equation*}
	C_\psi(b_k^{-s}n^{-s})=b_k^{-sc_0}b_k^{-\varphi(s)}C_{\psi}(n^{-s})\in b_k^{-sc_0}\mathcal{H}^2.
	\end{equation*}
The Bohr--lift respects multiplication \cite{HLS97}, that is
\begin{equation*}
B\left(mf\right)=B\left(m\right)B\left(f\right),\qquad m\in\mathcal{H}^\infty,\qquad f\in\mathcal{H}^2.
\end{equation*}
For every Dirichlet polynomial $f$, we have that
\begin{equation*}
\norm{C_\psi(b_k^{-s}f)}_{\mathcal{H}_{\Lambda}^2}^2=\norm{b_k^{-sc_0}b_k^{-\varphi(s)}C_{\psi}(f)}_{\mathcal{H}_{\Lambda}^2}^2=\int\limits_{\mathbb{T}^\infty}\left|B\left(b_k^{-\varphi(s)}\right)B\left(C_{\psi}(f)\right)\right|^2\,dm_\infty(\chi).
\end{equation*}
As we have already discussed, the Bohr--lift is norm preserving between $\mathcal{H}^\infty$ and $H^\infty(\mathbb{T}^\infty)$. Therefore
\begin{equation*}
\norm{B\left(b_k^{-\varphi(s)}\right)}_{H^\infty(\mathbb{T}^\infty)}=\norm{b_k^{-\varphi(s)}}_{\mathcal{H}^\infty}\leq 1.
\end{equation*}
Thus 
\begin{equation*}
	\norm{C_\psi(b_k^{-s}f)}_{\mathcal{H}_{\Lambda}^2}\leq \norm{C_\psi(f)}_{\mathcal{H}^2}\leq \norm{b_k^{-s}f}_{\mathcal{H}_{\Lambda}^2}.\qedhere
\end{equation*}	

\end{proof}
\begin{corollary}\label{corbdd}
	Let $\psi(s)=c_0s+\varphi(s)\in\mathfrak{G}_{\geq1}$. Then, the induced composition operator on $\mathcal{H}_{\Lambda}^2$ has the following orthogonal decomposition
	\begin{equation}\label{ordec}
	C_\psi=\bigoplus\limits_{k\geq0}C_{\psi, k}.
	\end{equation}
\end{corollary}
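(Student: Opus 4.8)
The plan is to lift the single-block statement of the preceding Proposition to a global orthogonal decomposition, the only real content being that distinct summands are carried into mutually orthogonal summands. Recall that the Proposition shows $C_\psi$ maps $b_k^{-s}\mathcal{H}^2$ into $b_k^{-c_0 s}\mathcal{H}^2$, with the restriction $C_{\psi,k}$ of norm one. The first observation I would record is that $b_k^{-c_0 s}=(b_k^{c_0})^{-s}$, and since $\mathbb{N}_q$ is closed under multiplication, $b_k^{c_0}$ is again a finite product of $q$-primes; hence $b_k^{c_0}=b_{j(k)}$ for some index $j(k)$, and the image of the $k$-th summand lands inside the summand $b_{j(k)}^{-s}\mathcal{H}^2$ of the same decomposition $\mathcal{H}_{\Lambda}^2=\bigoplus_{k}b_k^{-s}\mathcal{H}^2$.

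The key step is to check that this assignment does not merge two summands, i.e. that the images are pairwise orthogonal. This reduces to the injectivity of the power map $b_k\mapsto b_k^{c_0}$ on $\mathbb{N}_q$: since $c_0\geq 1$ and the $b_k$ are positive reals, $b_k^{c_0}=b_{k'}^{c_0}$ forces $b_k=b_{k'}$, so $k\mapsto j(k)$ is injective and the target summands $b_{j(k)}^{-s}\mathcal{H}^2$ are pairwise orthogonal. Consequently, for $f=\sum_{k}b_k^{-s}g_k\in\mathcal{H}_{\Lambda}^2$ with $g_k\in\mathcal{H}^2$, the images $C_{\psi,k}(b_k^{-s}g_k)$ are mutually orthogonal, and therefore $C_\psi f=\sum_{k}C_{\psi,k}(b_k^{-s}g_k)$ is exactly the orthogonal direct sum $\bigoplus_{k}C_{\psi,k}$.

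Finally I would address the passage from Dirichlet polynomials, for which the Proposition establishes the relevant norm control, to arbitrary elements of $\mathcal{H}_{\Lambda}^2$. Because each restriction satisfies $\norm{C_{\psi,k}}=1$ uniformly in $k$, the formal direct sum $\bigoplus_{k}C_{\psi,k}$ defines a bounded operator of norm one on $\bigoplus_{k}b_k^{-s}\mathcal{H}^2=\mathcal{H}_{\Lambda}^2$; by density of the polynomials and continuity this operator coincides with $C_\psi$, which gives the asserted decomposition.

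The main obstacle I anticipate is the orthogonality-preservation step: one must verify that the prime-shift $b_k\mapsto b_k^{c_0}$ neither sends a summand outside the decomposition nor collapses two of them, i.e. one needs the closure of $\mathbb{N}_q$ under $c_0$-th powers together with the injectivity of that power map. Once this combinatorial bookkeeping is in place, the assembly of the direct sum and the uniform norm control are routine, and the decomposition follows.
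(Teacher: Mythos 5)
Your argument is correct and is exactly the deduction the paper leaves implicit: the Proposition gives that each block $b_k^{-s}\mathcal{H}^2$ is mapped into $b_k^{-c_0s}\mathcal{H}^2=(b_k^{c_0})^{-s}\mathcal{H}^2$ with uniform norm bound, and your observation that $b_k\mapsto b_k^{c_0}$ stays in $\mathbb{N}_q$ and is injective (so the target blocks are pairwise orthogonal) is precisely what makes the direct-sum identity \eqref{ordec} hold. The density/continuity step at the end is also how the paper (tacitly) passes from polynomials to all of $\mathcal{H}_{\Lambda}^2$, so this is the same approach.
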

\begin{proof}[\textbf{Proof of Theorem \ref{bddG1}}]
The proof follows directly from the Corollary~\ref{corbdd}.
\end{proof}
\begin{proof}[\textbf{Proof of Theorem \ref{suf2}}]
	By \eqref{ordec}, it is sufficient to prove the following:
	\begin{enumerate}[(i)]
		\item $\norm{C_{\psi,k}}\rightarrow0.$\label{item1}
		\item $C_{\psi,k}$ is compact for every $k\geq0$.\label{item2}
	\end{enumerate}
	First we will prove \eqref{item1}. By Theorem \ref{ergodic}, for every Dirichlet polynomial $f\in\mathcal{H}^2$ and for almost every $\chi_0\in\mathbb{T}^\infty$, we have that
	\begin{multline*}
	\norm{C_{\psi,k}(b_k^{-s}f)}_{\mathcal{H}_{\Lambda}^2}^2=\int\limits_{\mathbb{T}^\infty}\left|B\left(b_k^{-\psi}C_\psi(f)\right)(\chi)\right|^2\,dm_\infty(\chi)\\
	=\lim\limits_{T\rightarrow+\infty}\frac{1}{2T}\int\limits_{-T}^{T}b_k^{-2\Re\left(\psi_{\chi_0}(it)\right)}\left|B\left(C_\psi(f)\right)(\lambda_n^{-it}\chi_0)\right|^2\,dt.
	\end{multline*}

The symbol $\psi$ has boundary values $\psi_{\chi}(it)=\lim\limits_{\sigma\rightarrow0^+}\psi_\chi(\sigma+it)$ for almost every $t\in\mathbb{R}$ and for almost every $\chi\in\mathbb{T}^\infty$. Furthermore, the vertical limit $\psi_\chi$ is in the class $\mathfrak{G}_{\geq1}$, see \cite{BAY02,BP20}. Thus
	\begin{align}\label{comweak}
	\norm{C_{\psi,k}(b_k^{-s}f)}_{\mathcal{H}_{\Lambda}^2}^2&\leq \lim\limits_{T\rightarrow+\infty}\frac{1}{2T}\int\limits_{-T}^{T}\lfloor b_k\rfloor^{-2\Re\left(\psi_{\chi_0}(it)\right)}\left|B\left(C_\psi(f)\right)(\lambda_n^{-it}\chi_0)\right|^2\,dt\nonumber\\
	&=\norm{C_{\psi}(\lfloor b_k\rfloor^{-s}f)}_{\mathcal{H}^2}^2,
	\end{align}
	where $\lfloor\cdot\rfloor$ is the floor function. We assume that \eqref{item1} fails, without loss of generality there exist $\delta>0$ and a sequence of Dirichlet polynomials $\{f_k\}_{k\geq1}$ in the unit ball of $\mathcal{H}^2$ such that
	\begin{equation}\label{contr}
	\norm{C_{\psi}(\lfloor b_k\rfloor^{-s}f_k)}_{\mathcal{H}^2}\geq\norm{C_\psi(b_k^{-s}f_k)}_{\mathcal{H}_{\Lambda}^2}>\delta,\qquad k\in\mathbb{N}.
	\end{equation}
	The sequence $\{\lfloor b_k\rfloor^{-s}f_k\}_{k\geq1}$ converges weakly to $0$ in $\mathcal{H}^2$ and as consequence
	$$\lim\limits_{n\rightarrow+\infty}\norm{C_{\psi}(\lfloor b_k\rfloor^{-s}f_k)}_{\mathcal{H}^2}=0.$$
	This contradicts with \eqref{contr}. Therefore, 
	$$\norm{C_{\psi,k}}\rightarrow0.$$
	For \eqref{item2}, we consider an arbitrary sequence $\{b_k^{-s}g_j\}_{j\geq1}$, which converges weakly to $0$ and we observe that $\{g_j\}_{j\geq1}$ is also weakly convergent to $0$ in $\mathcal{H}^2$. This implies that
	$$\norm{C_\psi(b_k^{-s}g_j)}_{\mathcal{H}_{\Lambda}^2}\leq\norm{C_\psi(g_j)}_{\mathcal{H}^2}\rightarrow0.$$
	Thus, $C_{\psi,k}$ is compact for every $k\geq1$.
\end{proof}
\section{Symbols that do not depend on a prime}\label{sddp}
\subsection{Submean value property}\label{submeanv}
Let $\Omega$ be an open subset of $\mathbb{C}$. We say that a function $u:\Omega\rightarrow[-\infty,\infty)$ satisfies the submean value property if for every disk $\overline{D(w,r)}\subset\Omega$
\begin{equation*}
u(w)\leq \frac{1}{|D(w,r)|}\int\limits_{D(w,r)}u(z)\,dA(z),
\end{equation*}
where $|D(w,r)| = \pi r^2$ is the area of the disk.

Shapiro \cite[Section 4]{SHAP87} proved that for every holomorphic self-map of the unit disk $\phi$, the Nevanlinna counting function $N_\phi$ satisfies the submean value property in $\mathbb{D}\setminus\{\phi(0)\}$.

The aim of this subsection is to prove the weak submean value property Theorem \ref{submean} for the average $\int\limits_{\mathbb{T}^\infty}	\mathcal{N}_{\psi_\chi}(w)\,dm_\infty(\chi)$, where $\psi\in\mathfrak{G}_{\geq 1}$.
\begin{equation}
\int\limits_{\mathbb{T}^\infty}	\mathcal{N}_{\psi_\chi}(w)\,dm_\infty(\chi)\leq \frac{C}{|D(w,r)|}\int\limits_{D(w,r)}\int\limits_{\mathbb{T}^\infty}	\mathcal{N}_{\psi_\chi}(z)\,dm_\infty(\chi)\,dA(z).
\end{equation}

Our argument will rely on a technique which has been developed in \cite{BP21,KP22} and allows us to transfer our notions in the disk setting.

We consider the unique conformal map $F$ from the unit disk onto the rectangle $$R=\{z:|\Im z|<2,\, 0<\Re z<2\},$$ with $F(0)=1$ and $F'(0)>0$. 
\begin{lemma}\label{estkoebe}
Suppose $s$ is a point with $0<\Re s< 1$ and $|\Im s|<2$. Then
\begin{equation}\label{estkoebe1}
1-|F^{-1}(s)|^2\leq C\Re s.
\end{equation}
Furthermore, if $0<\Re s< 1$ and $|\Im s|<1$. Then
\begin{equation}\label{estkoebe2}
1-|F^{-1}(s)|^2\geq C\Re s.
\end{equation}
\end{lemma}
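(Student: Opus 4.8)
The plan is to derive both inequalities from the standard two-sided Koebe distortion estimate for the univalent map $F$, namely
\begin{equation*}
\tfrac14(1-|z|^2)|F'(z)|\leq \dist\bigl(F(z),\partial R\bigr)\leq (1-|z|^2)|F'(z)|,\qquad z\in\mathbb{D},
\end{equation*}
combined with elementary control of $\dist(s,\partial R)$ and of $|F'|$ on the relevant subregions. Writing $z=F^{-1}(s)$ and $s=\sigma+it$, the first step is to compute the distance to the boundary of the rectangle: for an interior point one has $\dist(s,\partial R)=\min(\sigma,\,2-\sigma,\,2-|t|)$. When $0<\sigma<1$ this gives $\dist(s,\partial R)\leq\sigma=\Re s$, which is what I will need for \eqref{estkoebe1}; when in addition $|t|<1$ the competing quantities satisfy $2-\sigma>1>\sigma$ and $2-|t|>1>\sigma$, so $\dist(s,\partial R)=\sigma=\Re s$ exactly, which is what I will need for \eqref{estkoebe2}.

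The second step is to bound $|F'|$ on the two regions, for which I first record the boundary regularity of $F$. Since $R$ is a Jordan domain, $F$ extends to a homeomorphism of $\overline{\mathbb{D}}$ onto $\overline{R}$ by Carath\'eodory's theorem; across each open edge, a straight segment, $F$ continues analytically by Schwarz reflection, so $F'$ is continuous and nonvanishing up to the interior of every edge. For \eqref{estkoebe2} the point $s$ ranges over the strip $\{0<\Re s<1,\ |\Im s|<1\}$, whose closure meets $\partial R$ only along the interior of the left edge and stays away from all four corners; hence $F^{-1}$ maps it into a compact subset of $\overline{\mathbb{D}}$ on which $|F'|$ is continuous and finite, so $|F'(z)|\leq C$ there. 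The lower Koebe bound together with $\dist(s,\partial R)=\Re s$ then yields $1-|z|^2\geq \dist(s,\partial R)/|F'(z)|\geq C\Re s$.

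For \eqref{estkoebe1} I instead need $|F'|$ bounded \emph{below} on the preimage of the whole left half $\{0<\Re s<1,\ |\Im s|<2\}$, whose closure contains the two left corners $\pm 2i$, and this is where the main difficulty lies: at a corner $F$ is a homeomorphism but not conformal, so $F'$ does not extend to a finite nonzero value and the soft continuity argument of the previous paragraph breaks down. The resolution is to invoke the corner asymptotics of the conformal map. The interior angle of $R$ at each corner is $\pi/2$, so the Schwarz--Christoffel representation gives $F'(z)\sim c\,(z-\zeta_0)^{-1/2}$ as $z$ approaches a corner preimage $\zeta_0$, whence $|F'(z)|\to\infty$ there. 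Consequently $|F'|$, extended by $+\infty$ at the corner preimages, is lower semicontinuous and strictly positive on the compact preimage of the closed left half, hence bounded below by some $c>0$. The upper Koebe bound together with $\dist(s,\partial R)\leq\Re s$ then gives $1-|z|^2\leq 4\,\dist(s,\partial R)/|F'(z)|\leq C\Re s$, completing the argument.
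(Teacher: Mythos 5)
Your proof is correct and follows essentially the same route as the paper: the two-sided Koebe distortion estimate combined with boundedness of the derivative of the conformal map near the relevant boundary arcs, treating the corners $\pm 2i$ separately for the upper bound \eqref{estkoebe1}. The only cosmetic difference is that you control $|F'|$ from above away from the corners and from below at the corners via Schwarz reflection and the Schwarz--Christoffel asymptotics, whereas the paper states the equivalent bounds for $\left|\left(F^{-1}\right)'\right|$ by citing the Carath\'eodory and Kellogg--Warschawski theorems.
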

\begin{proof}
By the Koebe quarter theorem \cite[Corollary~1.4]{POM92}, for every $s\in R$, we have that
\begin{equation}\label{quarter}
\frac{1-|F^{-1}(s)|^2}{4\left|\left(F^{-1}\right)'(s)\right|}\leq \dist(s,\partial R)\leq \frac{1-|F^{-1}(s)|^2}{\left|\left(F^{-1}\right)'(s)\right|}.
\end{equation}
By the Caratheodory \cite[Theorem~2.6]{POM92} and the Kellogg-Warschawski theorems \cite[Theorem~ 3.9]{POM92}, there exist absolute constants $\delta_1,\,\delta_2>0$ such that for $0<\Re s< 1$ and $|\Im s|<1$ 
$$0<\delta_1<|\left(F^{-1}\right)' |<\delta_2<\infty.$$
This and \eqref{quarter} imply \eqref{estkoebe2}.

Again, by the Kellogg-Warschawski theorem there exists $r>0$ such that $|\left(F^{-1}\right)'(s) |$ is bounded in $\overline{R}\cap D\left(F^{-1}(\pm 2i),r\right)$. Now, 
\eqref{estkoebe1} follows by the Koebe quarter theorem working as above.
\end{proof}
\begin{lemma}[\cite{KP22}]\label{bddsubmean}
Let $\Omega$ be a bounded subdomain of $\mathbb{C}$ and $\phi:\mathbb{D}\rightarrow\Omega$ be holomorphic. Then, the classical Nevanlinna counting function $N_{\phi}(w)$ satisfies the submean value property.
\end{lemma}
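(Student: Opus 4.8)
The plan is to reduce the statement to Shapiro's theorem for holomorphic self-maps of $\mathbb{D}$, quoted above, by a rescaling of the target, and then to observe that the submean value property is preserved under an affine change of the variable $w$. Since $\Omega$ is bounded, I would fix $M>\sup_{z\in\Omega}|z|$, so that $\psi:=\phi/M$ is a holomorphic self-map of $\mathbb{D}$ with image relatively compact in $\mathbb{D}$; this is precisely where the boundedness of $\Omega$ is used. (If $\phi$ is constant the claim is trivial, and for non-constant $\phi$ boundedness also makes $\phi-w$ a non-trivial bounded function for $w\ne\phi(0)$, so its zeros satisfy the Blaschke condition and $N_\phi(w)<\infty$.) The two counting functions are then related by the elementary identity
\begin{equation*}
N_\phi(w)=\sum_{\phi(z)=w}\log\frac{1}{|z|}=\sum_{\psi(z)=w/M}\log\frac{1}{|z|}=N_\psi\!\left(\frac{w}{M}\right),\qquad w\ne\phi(0).
\end{equation*}

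By Shapiro's theorem, $N_\psi$ is subharmonic on $\mathbb{D}\setminus\{\psi(0)\}$ and hence enjoys the submean value property there. The affine map $w\mapsto w/M$ is holomorphic and carries $\Omega\setminus\{\phi(0)\}$ into $\mathbb{D}\setminus\{\psi(0)\}$, sending $\phi(0)$ to $\psi(0)$; since a subharmonic function composed with a holomorphic map is subharmonic, the identity above shows that $N_\phi$ is subharmonic on $\Omega\setminus\{\phi(0)\}$. Equivalently, one may transfer the inequality directly: for $\overline{D(w_0,r)}\subset\Omega\setminus\{\phi(0)\}$ the disk $\overline{D(w_0/M,r/M)}$ lies compactly in $\mathbb{D}\setminus\{\psi(0)\}$, and applying the submean value property of $N_\psi$ there together with the change of variables $\zeta=w/M$ (whose Jacobian cancels the factor $M^2$ coming from the radius) yields the desired inequality for $N_\phi$ at $w_0$.

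The only delicate point is the exceptional value $w=\phi(0)$, where $N_\phi(w)$ grows like $\log\frac{1}{|w-\phi(0)|}$ and is therefore singular from above, so subharmonicity cannot persist across it; this is why the property is asserted on $\Omega\setminus\{\phi(0)\}$, in line with the formulation of Shapiro's theorem, and one restricts to disks $\overline{D(w,r)}$ avoiding $\phi(0)$. Should one wish to avoid citing Shapiro, the same conclusion can be reached self-containedly from Jensen's formula: for $\rho<1$ the truncated function $N_{\phi,\rho}(w)=\sum_{\phi(z)=w,\,|z|<\rho}\log\frac{\rho}{|z|}$ equals $\frac{1}{2\pi}\int_0^{2\pi}\log|\phi(\rho e^{i\theta})-w|\,d\theta-\log|\phi(0)-w|$, which is visibly subharmonic in $w$ off $\phi(0)$ (an average over $\theta$ of the subharmonic functions $w\mapsto\log|\phi(\rho e^{i\theta})-w|$, minus the harmonic term $\log|\phi(0)-w|$); letting $\rho\uparrow1$ and passing the submean inequality to the limit by monotone convergence recovers the property for $N_\phi$. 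I expect the passage to the limit $\rho\to1$, together with the finiteness guaranteed by the boundedness of $\Omega$, to be the main technical obstacle along that alternative route, whereas the reduction to Shapiro keeps the argument essentially bookkeeping.
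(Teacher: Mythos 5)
Your argument is correct. Note first that the paper itself offers no proof of this lemma --- it is imported verbatim from \cite{KP22} --- so there is no in-paper argument to compare against; judged on its own terms, your reduction is sound. The identity $N_\phi(w)=N_\psi(w/M)$ with $\psi=\phi/M$ is elementary and exact, the closed disk $\overline{D(w_0/M,r/M)}$ does land compactly in $\mathbb{D}$ because $\sup_{\Omega}|z|<M$, and since the change of variables is affine, disks go to disks and averages to averages, so Shapiro's sub-mean-value inequality for $N_\psi$ transfers directly to $N_\phi$ without needing the (slightly delicate) claim that $N_\psi$ is genuinely subharmonic; your ``transfer the inequality directly'' sentence is the rigorous version and is the one to keep. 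The single caveat is the exceptional point: as you say, the inequality is only available for disks whose closure avoids $\phi(0)$, whereas the paper's definition of the submean value property nominally quantifies over all $\overline{D(w,r)}\subset\Omega$. This discrepancy is harmless here, because in the only place the lemma is used (Lemma 4.4, for $\phi=\psi\circ F_\sigma$) the disks lie in $\mathbb{C}_0\setminus\mathbb{C}_{1/2}$ while $\psi\circ F_\sigma(0)=\psi(1+\sigma)$ has real part at least $1$, so $\phi(0)$ is never in the disk --- but it is worth stating the restriction explicitly, as you do. Your alternative self-contained route via Jensen's formula (subharmonicity of the truncated counting function $N_{\phi,\rho}$ off $\phi(0)$, then monotone convergence as $\rho\uparrow1$) is the standard proof and almost certainly the content of the cited result; either version suffices.
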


\begin{lemma}\label{submeanforbdd}
	Let $\psi\in\mathfrak{G}_{\geq 1}$. Then, there exists an absolute constant $C>0$ such that
	\begin{equation}
	\mathcal{N}_{\psi}(w,1)\leq \frac{C}{|D(w,r)|}\int\limits_{D(w,r)}\mathcal{N}_{\psi}(z,2)\,dA(z),
	\end{equation}
	for every disk $D(w,r)\subset\mathbb{C}_0\setminus\mathbb{C}_{\frac{1}{2}}$.
\end{lemma}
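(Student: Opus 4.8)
The plan is to transfer the problem from the half-strip $R$ to the unit disk via the conformal map $F$, where the classical submean value property of the Nevanlinna counting function (Lemma \ref{bddsubmean}) is available, and then push the estimate back using the distortion bounds of Lemma \ref{estkoebe}. First I would observe that the counting function $\mathcal{N}_\psi(w,T)$ is, up to the factor $\pi/T$, a Nevanlinna-type sum $\sum \Re s$ over the preimages $s\in\psi^{-1}(\{w\})$ with $\Re s>0$ and $|\Im s|<T$. The key point is that on the relevant region $\mathbb{C}_0\setminus\mathbb{C}_{1/2}$, where $0<\Re s<1/2$, the quantity $\Re s$ is comparable to $\log\frac{1}{|F^{-1}(s)|}$ (equivalently to $1-|F^{-1}(s)|^2$) by Lemma \ref{estkoebe}. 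So the composed map $\phi:=F^{-1}\circ\psi$, suitably localized so that its image lands in $\mathbb{D}$, has a classical Nevanlinna counting function $N_\phi$ that controls $\mathcal{N}_\psi$ from both sides, with the two-sided comparison being responsible for the mismatch between the truncation levels $T=1$ on the left and $T=2$ on the right.

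The main steps, in order, are as follows. (1) Fix a disk $D(w,r)\subset\mathbb{C}_0\setminus\mathbb{C}_{1/2}$ and localize: restrict attention to preimages $s=\psi^{-1}(\{w\})$ with $|\Im s|<1$, so that $\psi$ maps the relevant part of its domain into the rectangle $R$ and $\phi=F^{-1}\circ\psi$ is a holomorphic map into $\mathbb{D}$ to which Lemma \ref{bddsubmean} applies. (2) Use the upper bound \eqref{estkoebe1} of Lemma \ref{estkoebe}, valid for $|\Im s|<2$, to dominate each term $\Re s$ appearing in $\mathcal{N}_\psi(z,2)$ by $C\log\frac{1}{|F^{-1}(s)|}$, which identifies the right-hand side integrand with (a constant multiple of) the classical $N_\phi(F^{-1}(z))$. (3) Use the lower bound \eqref{estkoebe2}, valid for the tighter strip $|\Im s|<1$, to bound $\mathcal{N}_\psi(w,1)$ from below by $C\,N_\phi(F^{-1}(w))$; the asymmetry in the admissible strips ($|\Im s|<1$ versus $|\Im s|<2$) is exactly why the lemma is stated with counting levels $1$ and $2$. (4) Apply the submean value inequality $N_\phi(F^{-1}(w))\leq \frac{1}{|D'|}\int_{D'} N_\phi\,dA$ on the image disk $D'=F^{-1}(D(w,r))$ in $\mathbb{D}$, and finally change variables back via $F$, absorbing the Jacobian $|F'|^2$ into the constant using the fact that $|(F^{-1})'|$ is bounded above and below on the compact region under consideration (again Lemma \ref{estkoebe} and the Kellogg--Warschawski bounds).

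The hard part will be step (4): the submean value property and the distortion estimates live naturally on disks, but after the conformal change of variables the image $F^{-1}(D(w,r))$ need not be a disk, so I must either invoke the submean value property of $N_\phi$ over the possibly-distorted region or, more carefully, enlarge to a genuine disk in $\mathbb{D}$ containing $F^{-1}(D(w,r))$ and then shrink the area comparison back, keeping track of how the Jacobian factors $|F'|^2$ distort the normalizing area $|D(w,r)|$ versus $|F^{-1}(D(w,r))|$. Controlling these Jacobian factors uniformly is where the two-sided bounds $0<\delta_1<|(F^{-1})'|<\delta_2<\infty$ from the proof of Lemma \ref{estkoebe} are essential, and the whole argument must be confined to the strip $|\Im s|<1$ so that both distortion inequalities of Lemma \ref{estkoebe} are simultaneously in force; the truncation to $|\Im s|<2$ on the right-hand side provides just enough room to accommodate the preimages whose imaginary parts are pushed slightly outside $[-1,1]$ by the geometry of $D(w,r)$.
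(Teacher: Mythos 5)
Your plan composes the Riemann map on the wrong side, and this is not a cosmetic slip: it is precisely what creates the ``hard part'' you flag in step (4), a difficulty that does not arise in a correct argument. The rectangle must live in the \emph{source} plane: one precomposes, forming $\psi\circ F:\mathbb{D}\to\mathbb{C}$, so that the preimages of a target point $z$ become the disk points $F^{-1}(s)$, $s\in\psi^{-1}(\{z\})\cap R$, and $N_{\psi\circ F}(z)=\sum\log\frac{1}{|F^{-1}(s)|}$ is a genuine classical Nevanlinna counting function, comparable to $\sum\Re s$ via Lemma~\ref{estkoebe}. The submean value property of Lemma~\ref{bddsubmean} is then applied \emph{in the original target coordinates} over the Euclidean disk $D(w,r)$ itself --- no change of variables in the target, no distorted region $F^{-1}(D(w,r))$, no Jacobian $|F'|^2$ to absorb. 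Your $\phi=F^{-1}\circ\psi$ instead transforms the target; but then the weights $\log\frac{1}{|\cdot|}$ in $N_\phi$ must be evaluated at source points, which for you still live in $\mathbb{C}_0$ (and if you also precompose with $F$ to fix this, the map $F^{-1}\circ\psi\circ F$ is not defined on all of $\mathbb{D}$, since $\psi$ does not map $R$ into $R$), so Lemma~\ref{bddsubmean} simply does not apply to it and step (4) cannot be executed as stated. Note also that $D(w,r)$ is an arbitrary disk in $\mathbb{C}_0\setminus\mathbb{C}_{\frac{1}{2}}$ with no bound on $|\Im w|$, so it need not lie in $R$; the constraints $|\Im s|<1$ and $|\Im s|<2$ are constraints on the preimages, i.e.\ on the source, which is another sign that $F$ belongs on the inside. (Your reading of why the truncation levels are $1$ and $2$ --- the mismatch between the strips on which the two Koebe-type bounds of Lemma~\ref{estkoebe} hold --- is correct.)

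There is a second, independent gap: even with the correct composition $\psi\circ F$, Lemma~\ref{bddsubmean} requires the image to be a \emph{bounded} domain, and $\psi(R)$ need not be bounded, because $\overline{R}$ touches the line $\Re s=0$ where $\varphi$ may blow up (take $\varphi(s)=1+2\sum_{k\geq1}(2^k)^{-s}=(1+2^{-s})/(1-2^{-s})$, which maps $\mathbb{C}_0$ into $\mathbb{C}_0$ but is unbounded near $s=0$). The paper circumvents this with the shifted rectangles $R_\sigma$, $\sigma>0$, whose closures are compact in $\mathbb{C}_0$, together with the truncated counting functions $\mathcal{N}_\psi(\cdot,1,2\sigma)$ --- the factor $2$ in $2\sigma$ being what allows the shift to be absorbed via $\Re s\leq 2(\Re s-\sigma)$ --- followed by a monotone convergence passage as $\sigma\to0^+$. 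Your proposal contains no such regularization, and without it the appeal to Lemma~\ref{bddsubmean} is unjustified.
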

\begin{proof}
Let $F_{\sigma}(z)= F(z)+\sigma$ be the Riemann map from the unit disk onto the rectangle
	$$R_{\sigma}=\left\{z: \sigma<\Re z<2+\sigma,\, |\Im z|<2 \right\},$$
	with $F_{\sigma}(0)=1+\sigma$ and $F_{\sigma}'(0)>0$.
	
By Lemma \ref{estkoebe}
\begin{equation}\label{skoebe1}
1-|F_\sigma^{-1}(s)|^2\leq C_1(\Re s-\sigma),
\end{equation}
whenever $\sigma<\Re s< 1$, $|\Im s|<2$ and
\begin{equation}\label{skoebe2}
1-|F_\sigma^{-1}(s)|^2\geq C_2(\Re s-\sigma),
\end{equation}
whenever $\sigma<\Re s< 1$, $|\Im s|<1$.

We observe that $\Re s\leq \Re \psi( s)$ and that  $$1-\left|F_{\sigma}^{-1}(s)\right|^2\sim \log\frac{1}{\left|F_{\sigma}^{-1}(s)\right|},\qquad s\in R_\sigma \cap\mathbb{C}_0\setminus\mathbb{C}_{\frac{1}{2}}.$$
By \eqref{skoebe2}, for $z\in D(w,r)\subset\mathbb{C}_0\setminus\mathbb{C}_{\frac{1}{2}}$
\begin{multline*}
\mathcal{N}_{\psi}(z,1,2\sigma):=\pi\sum\limits_{\substack{s\in\psi^{-1}(\{z\})\\
			|\Im s|<1\\
		\Re s>2\sigma}}\Re s=\pi\sum\limits_{\substack{s\in\psi^{-1}(\{z\})\\
		|\Im s|<1\\
		2\sigma<\Re s<\frac{1}{2}}}\Re s\\
\leq 2\pi \sum\limits_{\substack{s\in\psi^{-1}(\{z\})\\
			|\Im s|<1\\
		\sigma<\Re s<\frac{1}{2}}} \left(\Re s-\sigma\right)
\leq C \sum\limits_{\substack{s\in\psi^{-1}(\{z\})\\
			|\Im s|<2\\
		\sigma<\Re s<\frac{1}{2}}}\left(1-\left|F_{\sigma}^{-1}(s)\right|^2\right)
	\leq C N_{\psi\circ F_{\sigma}}(z).
\end{multline*}
By \eqref{skoebe1}, for  $z\in D(w,r)$, we have that
	\begin{align*}
	N_{\psi\circ F_{\sigma}}(z)\leq C \sum\limits_{\substack{s\in\psi^{-1}(\{z\})\\
			|\Im s|<2\\
			\sigma<\Re s<\frac{1}{2}}}\left(1-\left|F_{\sigma}^{-1}(s)\right|^2\right) 
	\leq C\frac{\pi}{2 }\sum\limits_{\substack{s\in\psi^{-1}(\{z\})\\
			|\Im s|<2\\
			\sigma<\Re s<\frac{1}{2}}}\Re s= C \mathcal{N}_{\psi}(w,2,\sigma).
	\end{align*}
By Lemma \ref{bddsubmean} the function $N_{\psi\circ F_{\sigma}}$ satisfies the submean value property and 
	\begin{equation*}
	\mathcal{N}_{\psi}(z,1,2\sigma)\leq C_1N_{\psi\circ F_{\sigma}}(z)\leq C_2\mathcal{N}_{\psi}(z,2,\sigma).
	\end{equation*}
	Therefore 
	\begin{equation}\label{almostsubmean}
	\mathcal{N}_{\psi}(w,1,2\sigma)\leq  \frac{C}{|D(w,r)|}\int\limits_{D(w,r)}\mathcal{N}_{\psi}(z,2,\sigma)\,dA(z).
	\end{equation}
We can apply the monotone convergence theorem  to let $\sigma\rightarrow0^+$, yielding that
$$\mathcal{N}_{\psi}(w,1)\leq \frac{C}{|D(w,r)|}\int\limits_{D(w,r)}\mathcal{N}_{\psi}(z,2)\,dA(z),$$
	for an absolute constant $C>0$.
\end{proof}
The following theorem will allow us to apply Theorem \ref{ergodic} for the counting function $\mathcal{N}_{\psi_\chi}(w)$. 
\begin{theorem}{\cite{BAY03}}\label{bayartL1}
	Let $\psi(s)=c_0s+\varphi(s)\in \mathfrak{G}_{\geq1}$. Then, for every $w\in\mathbb{C}_0$
	\begin{equation}
	\mathcal{N}_\psi(w)\leq\frac{\Re w}{c_0}.
	\end{equation}
\end{theorem}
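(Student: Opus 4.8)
The plan is to read the sum defining $\mathcal{N}_\psi(w)$ as a sum of Green's functions of the right half-plane, taken with the pole sent to infinity, and to bound that sum by a single Green's function through the valence (Littlewood subordination) inequality. Throughout let $g(z,\zeta)=\log\bigl|\tfrac{z+\bar\zeta}{z-\zeta}\bigr|$ be the Green's function of $\{\Re z>0\}$ with pole at $\zeta$; note $g\ge 0$ there, since $|z+\bar\zeta|^2-|z-\zeta|^2=4\,\Re z\,\Re\zeta>0$. First I would record that $\psi$ is a self-map of the half-plane: for $\Re s>0$ we have $\Re\psi(s)=c_0\Re s+\Re\varphi(s)\ge c_0\Re s>0$, using $c_0\ge1$ and $\varphi(\mathbb{C}_0)\subset\mathbb{C}_0$. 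The degenerate case $\varphi\equiv i\tau$ gives $\psi(s)=c_0s+i\tau$ with a single preimage, so $\mathcal{N}_\psi(w)=\Re w/c_0$ directly; I dispose of it first and assume henceforth $\psi(\mathbb{C}_0)\subset\mathbb{C}_0$ nondegenerate.

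The key step is the inequality, valid for every $a\in\mathbb{C}_0$ with $\psi(a)\neq w$,
$$\sum_{s\in\psi^{-1}(\{w\})} g(s,a)\ \le\ g(w,\psi(a)).$$
I would prove this by the standard superharmonicity argument: the function $u(z)=g(\psi(z),w)-\sum_{s}g(z,s)$ (sum over the preimages $s$ of $w$) has cancelling logarithmic singularities at each preimage, hence extends harmonically across them to all of $\mathbb{C}_0$; it is nonnegative on the boundary and at infinity, where each $g(z,s)\to0$ while $g(\psi(z),w)\ge0$, so by the minimum principle $u\ge0$. Evaluating at $z=a$ and using the symmetry $g(a,s)=g(s,a)$ gives the claim. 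To make the minimum principle rigorous on the unbounded half-plane, I would transport everything to the disk via a Cayley map $\tau:\mathbb{D}\to\mathbb{C}_0$, invoke conformal invariance of the Green's function and the classical Littlewood inequality for the self-map $\tau^{-1}\circ\psi\circ\tau$ of $\mathbb{D}$, and transport back.

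Finally I would let $a\to+\infty$ along the positive real axis, pushing the pole to infinity, where $\Re$ plays the role of the Green's function. The relevant asymptotics are
$$\tfrac{a}{2}\,g(s,a)\longrightarrow\Re s,\qquad \tfrac{a}{2}\,g(w,\psi(a))\longrightarrow\frac{\Re w}{c_0},$$
the first from $g(s,a)=\tfrac12\log\tfrac{(\Re s+a)^2+(\Im s)^2}{(\Re s-a)^2+(\Im s)^2}\sim\tfrac{2\Re s}{a}$, and the second from $\psi(a)=c_0a+\varphi(a)\sim c_0a$, since $\varphi(a)\to\varphi(+\infty)$ stays bounded, whence $g(w,\psi(a))\sim\tfrac{2\Re w}{c_0a}$. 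Multiplying the key inequality by $a/2$ and applying Fatou's lemma to the left-hand sum, which is legitimate because every term $\tfrac{a}{2}g(s,a)$ is nonnegative, I obtain
$$\mathcal{N}_\psi(w)=\sum_{s}\Re s\ \le\ \liminf_{a\to\infty}\ \tfrac{a}{2}\sum_{s}g(s,a)\ \le\ \lim_{a\to\infty}\tfrac{a}{2}\,g(w,\psi(a))=\frac{\Re w}{c_0}.$$
The main obstacle is the rigorous justification of the valence inequality on the unbounded half-plane, that is, controlling the behavior at infinity so that the minimum principle genuinely applies; the transfer to the disk is precisely what circumvents this, after which the limiting step is routine thanks to the nonnegativity of $g$ and Fatou's lemma.
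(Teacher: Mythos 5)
Your proof is correct. Note that the paper itself offers no argument for this statement---it is quoted verbatim from Bayart's work \cite{BAY03}---so there is nothing internal to compare against; your route (the generalized Littlewood inequality $\sum_{s\in\psi^{-1}(\{w\})}g(s,a)\le g(w,\psi(a))$ for the half-plane Green's function, made rigorous by conjugating with a Cayley map, followed by sending the base point $a\to+\infty$ so that $\tfrac{a}{2}g(\cdot,a)$ degenerates to $\Re(\cdot)$ and $\psi(a)\sim c_0a$ produces the factor $1/c_0$) is precisely the standard derivation of this Littlewood-type bound in the Dirichlet-series literature, and all the delicate points (nonnegativity for Fatou, finiteness of $\varphi(+\infty)$, the degenerate case $\varphi\equiv i\tau$) are handled.
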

The following lemma will be of key importance for the proof of the weak submean value property, Theorem~\ref{submean}, and Theorem~\ref{suf1}. Despite its technical and maybe serendipitous look, the idea behind Lemma~\ref{tonellitrick} may be useful. See, for example the interchange of limits problem \cite[Problem~1]{BP21} and the partial solution of it \cite[Theorem~4.9]{KP22}.
\begin{lemma}\label{tonellitrick}
Let $\psi\in\mathfrak{G}_{\geq1},\,T>0$ and $w\in\mathbb{C}_0$. Then
\begin{equation}
\int\limits_{\mathbb{T}^\infty}	\mathcal{N}_{\psi_\chi}(w)\,dm_\infty(\chi)=\int\limits_{\mathbb{T}^\infty}	\mathcal{N}_{\psi_\chi}(\Re w)\,dm_\infty(\chi)=\frac{1}{2\pi c_0}\int\limits_{\mathbb{T}^\infty}	\int\limits_{-\infty}^{+\infty}\mathcal{N}_{\psi_\chi}(w+it,T)\,dt\,dm_\infty(\chi).
\end{equation}
\end{lemma}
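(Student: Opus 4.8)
The plan is to reduce both equalities to a single algebraic relation between the symbol $\psi_\chi$ at a vertically translated target and the symbol $\psi_{T_\tau\chi}$ at the original target, and then to exploit the invariance of the Haar measure $m_\infty$ under the Kronecker flow $T_\tau$. The starting point is the translation rule for vertical limits: since a vertical shift $s\mapsto s+i\tau$ acts on characters exactly as the flow $T_\tau$, one has $\varphi_\chi(s+i\tau)=\varphi_{T_\tau\chi}(s)$, and hence, because $\psi=c_0s+\varphi$,
\[
\psi_\chi(s+i\tau)=\psi_{T_\tau\chi}(s)+ic_0\tau .
\]
This holds for almost every $\chi$, where $\varphi_\chi$ converges on $\mathbb{C}_0$ and $\psi_\chi\in\mathfrak{G}_{\geq1}$. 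Its key consequence is a bijection $s\mapsto s-i\tau$ between $\psi_\chi^{-1}(\{w+ic_0\tau\})$ and $\psi_{T_\tau\chi}^{-1}(\{w\})$ that preserves real parts and shifts imaginary parts by $-\tau$; in particular it turns a vertical translation of the target into a translation of the truncation window $\{|\Im s|<T\}$.

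First I would prove the left equality. The bijection gives $\mathcal{N}_{\psi_\chi}(w+ic_0\tau)=\mathcal{N}_{\psi_{T_\tau\chi}}(w)$ (it preserves real parts and there is no window to track), and integrating in $\chi$ while using the $T_\tau$-invariance of $m_\infty$ shows that $\int_{\mathbb{T}^\infty}\mathcal{N}_{\psi_\chi}(w)\,dm_\infty(\chi)$ is unchanged under $w\mapsto w+ic_0\tau$. Since $c_0\geq1$, the shifts $ic_0\tau$ exhaust the imaginary axis, so this average depends only on $\Re w$, which is the first identity. Throughout, Theorem~\ref{bayartL1} guarantees $\mathcal{N}_{\psi_\chi}(w)\leq\Re w/c_0$, so every quantity is finite and the interchange of integral and summation is legitimate by Tonelli, all summands $\Re s$ being positive.

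For the right equality I would carry out the ``Tonelli trick''. Substituting $t=c_0\tau$, so $dt=c_0\,d\tau$, and applying the bijection, the truncated count at $w+ic_0\tau$ becomes $\tfrac{\pi}{T}\sum\Re s'$ over $s'\in\psi_{T_\tau\chi}^{-1}(\{w\})$ with $\Re s'>0$ and $\Im s'\in(-T-\tau,\,T-\tau)$; replacing $T_\tau\chi$ by $\eta$ via Haar invariance removes the $\tau$-dependence from the character. I then interchange the $\tau$-integral, the $\eta$-average and the sum (Tonelli, positivity), so that the only $\tau$-dependence sits in the window indicator $\mathbf{1}[-T-\Im s'<\tau<T-\Im s']$. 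Integrating this in $\tau$ gives the constant length $2T$, \emph{independent of $s'$}; this is the crucial point, since it is exactly what makes the truncation disappear. The surviving sum is $\sum_{\Re s'>0}\Re s'=\mathcal{N}_{\psi_\eta}(w)$, and collecting the constants $c_0\cdot\tfrac{\pi}{T}\cdot 2T=2\pi c_0$ yields $\int_{\mathbb{T}^\infty}\int_{-\infty}^{+\infty}\mathcal{N}_{\psi_\chi}(w+it,T)\,dt\,dm_\infty(\chi)=2\pi c_0\int_{\mathbb{T}^\infty}\mathcal{N}_{\psi_\eta}(w)\,dm_\infty(\eta)$, which is the claim after dividing by $2\pi c_0$.

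The main obstacle is not the algebra but the measure-theoretic bookkeeping: one must establish the shift relation and the preimage bijection for almost every $\chi$, where $\psi_\chi$ is genuinely a member of $\mathfrak{G}_{\geq1}$ and the vertical-limit series behaves, and then justify the repeated interchange of the $\tau$-integration, the integration over $\mathbb{T}^\infty$, and the summation over preimages. Both are handled by positivity together with the uniform bound of Theorem~\ref{bayartL1}, which forces all iterated integrals to be finite and equal. The one genuinely clever step is recognizing that, after converting a target shift into a window shift, the window contributes the constant factor $2T$ to the $\tau$-integral regardless of the location of the preimage $s'$; this cancels the normalization $\pi/T$ and leaves precisely the untruncated average.
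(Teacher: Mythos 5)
Your proof is correct and follows essentially the same route as the paper: the identification of $\psi_\chi^{-1}(\{w+it\})$ with a vertically shifted copy of $\psi_{\chi\chi_t}^{-1}(\{w\})$ (your $T_\tau\chi$ with $t=c_0\tau$ is exactly the paper's character $\chi_t(n)=n^{-it/c_0}$), Haar invariance to absorb the rotation, and Tonelli to integrate the window indicator to a constant length that cancels the $\pi/T$ normalization. The only cosmetic difference is that you substitute $t=c_0\tau$ up front so the window integral gives $2T$ with a Jacobian $c_0$, whereas the paper keeps $t$ and gets $2c_0T$ directly; these are identical computations.
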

\begin{proof}
We observe that $s\in\psi_{\chi}^{-1}(\{w+it\})$ if and only if $s-\frac{it}{c_0}\in\psi_{\chi\chi_t}^{-1}(\{w\}),$ where $\chi_t(n)=n^{-\frac{it}{c_0}}$ and $t\in\mathbb{R}$. Therefore
\begin{align*}
\int\limits_{\mathbb{T}^\infty}	\int\limits_{-\infty}^{+\infty}\mathcal{N}_{\psi_\chi}(w+it,T)\,dt\,dm_\infty(\chi)&=\frac{\pi}{T}\int\limits_{\mathbb{T}^\infty}	\int\limits_{-\infty}^{+\infty}\sum\limits_{\substack{s\in\psi_{\chi\chi_t}^{-1}(\{w\})\\
		-T-\frac{t}{c_0}<\Im s<T-\frac{t}{c_0}\\
		\Re s>0}}\Re s \,dt\,dm_\infty(\chi).
\end{align*}
The Haar measure $m_\infty$ is rotation invariant. This and Tonelli's theorem imply that 
\begin{align*}
\int\limits_{\mathbb{T}^\infty}	\int\limits_{-\infty}^{+\infty}\mathcal{N}_{\psi_\chi}(w+it,T)\,dt\,dm_\infty(\chi)&=\frac{\pi}{T}\int\limits_{\mathbb{T}^\infty}	\int\limits_{-\infty}^{+\infty}\sum\limits_{\substack{s\in\psi_{\chi}^{-1}(\{w\})\\
		-T-\frac{t}{c_0}<\Im s<T-\frac{t}{c_0}\\
		\Re s>0}}\Re s \,dt\,dm_\infty(\chi)\\
&=\frac{\pi}{T}\int\limits_{\mathbb{T}^\infty}	\sum\limits_{\substack{s\in\psi_{\chi}^{-1}(\{w\})\\
			\Re s>0}}\Re s\int\limits_{c_0(-\Im s-T)}^{c_0(T-\Im s)} \,dt\,dm_\infty(\chi)\\
&=2c_0 \pi \int\limits_{\mathbb{T}^\infty}	\mathcal{N}_{\psi_\chi}(w)\,dm_\infty(\chi)
=2c_0 \pi \int\limits_{\mathbb{T}^\infty}	\mathcal{N}_{\psi_\chi}(\Re w)\,dm_\infty(\chi). \qedhere
\end{align*}
\end{proof}
\begin{theorem}\label{submean}
	Let $\psi\in\mathfrak{G}_{\geq 1}$. Then, there exists an absolute constant $C>0$ such that
	\begin{equation}
	\int\limits_{\mathbb{T}^\infty}	\mathcal{N}_{\psi_\chi}(w)\,dm_\infty(\chi)\leq  \frac{C}{|D(w,r)|}\int\limits_{D(w,r)}\int\limits_{\mathbb{T}^\infty}	\mathcal{N}_{\psi_\chi}(z)\,dm_\infty(\chi)\,dA(z),
	\end{equation}
	for every disk $D(w,r)\subset\mathbb{C}_0\setminus\mathbb{C}_{\frac{1}{2}}$.
\end{theorem}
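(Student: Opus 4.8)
The plan is to reduce the averaged statement to the single-symbol submean inequality of Lemma~\ref{submeanforbdd} by slicing the $\mathbb{T}^\infty$-average into a vertical integral via Lemma~\ref{tonellitrick}. The starting observation is that the strip $\mathbb{C}_0\setminus\mathbb{C}_{\frac{1}{2}}$ is invariant under the vertical translations $z\mapsto z+it$, so if $D(w,r)\subset\mathbb{C}_0\setminus\mathbb{C}_{\frac{1}{2}}$ then $D(w+it,r)\subset\mathbb{C}_0\setminus\mathbb{C}_{\frac{1}{2}}$ for every $t\in\mathbb{R}$. Moreover, each vertical limit $\psi_\chi$ again belongs to $\mathfrak{G}_{\geq1}$, so Lemma~\ref{submeanforbdd} is available for every $\psi_\chi$ at every such translate.

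First I would apply Lemma~\ref{tonellitrick} with $T=1$ to rewrite the left-hand side as
\begin{equation*}
\int\limits_{\mathbb{T}^\infty}\mathcal{N}_{\psi_\chi}(w)\,dm_\infty(\chi)=\frac{1}{2\pi c_0}\int\limits_{\mathbb{T}^\infty}\int\limits_{-\infty}^{+\infty}\mathcal{N}_{\psi_\chi}(w+it,1)\,dt\,dm_\infty(\chi).
\end{equation*}
Inside the integral I would then invoke Lemma~\ref{submeanforbdd} for the disk $D(w+it,r)$, giving the pointwise bound
\begin{equation*}
\mathcal{N}_{\psi_\chi}(w+it,1)\leq\frac{C}{\pi r^2}\int\limits_{D(w+it,r)}\mathcal{N}_{\psi_\chi}(\zeta,2)\,dA(\zeta),
\end{equation*}
with $C$ absolute. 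All integrands are nonnegative, so Tonelli's theorem lets me interchange the order of integration freely; after the translation $\zeta=z+it$, which sends $D(w+it,r)$ onto $D(w,r)$, this becomes
\begin{equation*}
\int\limits_{\mathbb{T}^\infty}\int\limits_{-\infty}^{+\infty}\mathcal{N}_{\psi_\chi}(w+it,1)\,dt\,dm_\infty(\chi)\leq\frac{C}{\pi r^2}\int\limits_{D(w,r)}\left[\int\limits_{\mathbb{T}^\infty}\int\limits_{-\infty}^{+\infty}\mathcal{N}_{\psi_\chi}(z+it,2)\,dt\,dm_\infty(\chi)\right]dA(z).
\end{equation*}
Applying Lemma~\ref{tonellitrick} once more, now with $T=2$ and for each fixed $z\in D(w,r)$, collapses the bracketed quantity to $2\pi c_0\int_{\mathbb{T}^\infty}\mathcal{N}_{\psi_\chi}(z)\,dm_\infty(\chi)$. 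Combining this with the $T=1$ identity for the left-hand side and cancelling the common factor $2\pi c_0$ gives the desired inequality, using $|D(w,r)|=\pi r^2$.

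The conceptual heart of the argument, and the main thing to get right, is the bookkeeping of the two truncation levels: Lemma~\ref{submeanforbdd} necessarily trades the cutoff $|\Im s|<1$ on the left for the weaker cutoff $|\Im s|<2$ on the right, and it is precisely the fact that Lemma~\ref{tonellitrick} holds for \emph{every} $T>0$ and always returns the same untruncated average that lets these two different levels reconcile after integration in $t$. I would also check that each interchange of integrals is legitimate (immediate from nonnegativity) and that the quantities involved are finite, which is guaranteed by Theorem~\ref{bayartL1}; this finiteness is what makes the final cancellation meaningful rather than a vacuous $\infty\leq\infty$. No genuinely new estimate is needed beyond these two lemmas.
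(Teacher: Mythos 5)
Your proof is correct and follows essentially the same route as the paper: apply Lemma~\ref{submeanforbdd} to each vertical limit $\psi_\chi$ at the translated point $w+it$, integrate over $\chi\in\mathbb{T}^\infty$ and $t\in\mathbb{R}$, and use Lemma~\ref{tonellitrick} (with $T=1$ on the left and $T=2$ on the right) to identify both sides with the untruncated averages. Your explicit remarks on the vertical-translation invariance of the strip, the fact that $\psi_\chi\in\mathfrak{G}_{\geq1}$, and the reconciliation of the two truncation levels are exactly the points the paper's terse proof leaves implicit.
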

\begin{proof}
By Lemma \ref{submeanforbdd} 
	\begin{equation}\label{18}
\mathcal{N}_{\psi_\chi}(w+it,1)\leq \frac{C}{|D(w,r)|}\int\limits_{D(w,r)}\mathcal{N}_{\psi_\chi}(z+it,2)\,dA(z).
\end{equation}
The proof follows by Lemma \ref{tonellitrick} integrating \eqref{18} with respect to $\chi\in\mathbb{T}^\infty$ and then $t\in\mathbb{R}$.
\end{proof}
\subsection{Necessity when omitting a prime}
This subsection is devoted to the following weaker version of Theorem~\ref{characterization}.
\begin{theorem}\label{suf1}
	Suppose $\psi(s)=c_0s+\varphi(s)\in\mathfrak{G}_{\geq 1}$ with $\varphi(s)=\sum\limits_{p\nmid n}\frac{a_n}{n^s}$, where $p$ is a prime number. If the induced composition operator is compact on $\mathcal{H}^2$, then
	\begin{equation*}
	\lim\limits_{\Re w\rightarrow0}\int\limits_{\mathbb{T}^\infty}\frac{\mathcal{N}_{\psi_\chi}(w)}{\Re w}\,dm_\infty(\chi)=0.
	\end{equation*}
\end{theorem}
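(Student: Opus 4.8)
The plan is to probe compactness with reproducing kernels built from the \emph{single omitted prime} $p$, which, unlike the kernels $k_w=\zeta(\overline w+s)$ of $\mathcal H^2$, remain available all the way down to the line $\Re w=0$. For $\Re w>0$ set
$$K_w(s)=\frac{1}{1-p^{-\overline w}\,p^{-s}}=\sum_{k\ge 0}p^{-k\overline w}\,p^{-ks},$$
so that $\norm{K_w}_{\mathcal H^2}^2=(1-p^{-2\Re w})^{-1}$ and, since every Dirichlet coefficient is bounded while the norm blows up, $K_w/\norm{K_w}\to 0$ weakly in $\mathcal H^2$ as $\Re w\to 0^+$. By the assumed compactness of $C_\psi$ it therefore suffices to show that the forced vanishing of $\norm{C_\psi K_w}^2/\norm{K_w}^2$ yields \eqref{avcond}. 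The decisive structural point, available precisely because $\varphi$ is supported off $p$, is that $\varphi_\chi$, and hence $\psi_\chi$ and $\mathcal N_{\psi_\chi}$, does not depend on the coordinate $\chi(p)$ of $\chi\in\mathbb T^\infty$.

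First I would evaluate $\norm{C_\psi K_w}^2$ via Stanton's formula \eqref{staton}. Writing $\alpha=p^{-\overline w}\chi(p)^{c_0}$ one has $(K_w)_{\chi^{c_0}}(s)=(1-\alpha p^{-s})^{-1}$, so the integrand $\lvert (K_w)_{\chi^{c_0}}'(z)\rvert^2$ depends on $\chi$ only through $\chi(p)$, while $\mathcal N_{\psi_\chi}(z,T)$ is independent of $\chi(p)$. Integrating first in the single coordinate $\chi(p)\in\mathbb T$ and using $\int_{\mathbb T}\frac{dm(\zeta)}{|1-\rho\zeta|^4}=\frac{1+\rho^2}{(1-\rho^2)^3}$ produces a weight $\Phi(\Re w+\Re z)$ that depends only on $\Re w+\Re z$ and satisfies $\Phi(x)\sim \frac{1}{4\log p}\,x^{-3}$ as $x\to 0^+$. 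Thus, for fixed $T>0$,
$$\norm{C_\psi K_w}^2=|K_w(+\infty)|^2+\frac{2}{\pi}\int_{\mathbb C_0}\Phi(\Re w+\Re z)\Bigl(\int_{\mathbb T^\infty}\mathcal N_{\psi_\chi}(z,T)\,dm_\infty(\chi)\Bigr)dA(z).$$
Because $\Phi$ is independent of $\Im z$, I would carry out the vertical integration of the inner term first; here Lemma~\ref{tonellitrick} is essential, since it gives $\int_{-\infty}^{\infty}\int_{\mathbb T^\infty}\mathcal N_{\psi_\chi}(x+iy,T)\,dm_\infty\,dy=2\pi c_0\,M(x)$, where $M(x):=\int_{\mathbb T^\infty}\mathcal N_{\psi_\chi}(x)\,dm_\infty$ depends only on the real part. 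This collapses the expression to
$$\norm{C_\psi K_w}^2=|K_w(+\infty)|^2+4c_0\int_0^{\infty}\Phi(\Re w+x)\,M(x)\,dx,$$
a finite quantity by Theorem~\ref{bayartL1} (which gives $M(x)\le x/c_0$) and the exponential decay of $\Phi$ at infinity.

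Finally I would extract the pointwise decay of $M$. Taking $w=\sigma$ real and using the lower bound $\Phi(\sigma+x)\ge c_p\,(\sigma+x)^{-3}$ for small argument together with $\norm{K_\sigma}^{-2}\ge \sigma\log p$, the vanishing of the normalized norm gives $\sigma\int_0^{\delta}M(x)(\sigma+x)^{-3}\,dx\to 0$ as $\sigma\to 0^+$ for some fixed $\delta>0$. Restricting the integral to $0<x<\sigma$ and bounding $(\sigma+x)^3\le (2\sigma)^3$ yields $\sigma^{-2}\int_0^\sigma M(x)\,dx\to 0$; that is, the averages of $M$ over $(0,\sigma)$ are $o(\sigma^2)$. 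To pass from this averaged statement to the pointwise bound $M(\sigma)=o(\sigma)$ I would invoke the weak submean value property, Theorem~\ref{submean}: applying it to the disk $D(\sigma,\sigma/2)\subset\mathbb C_0\setminus\mathbb C_{\frac12}$ and crudely estimating the area integral gives $M(\sigma)\le \frac{C}{\sigma}\int_0^{3\sigma/2}M(x)\,dx=o(\sigma)$. Since $\int_{\mathbb T^\infty}\mathcal N_{\psi_\chi}(w)\,dm_\infty=M(\Re w)$ by Lemma~\ref{tonellitrick}, this is exactly \eqref{avcond}.

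The obstacle is twofold. The conceptual hurdle is that the natural kernels of $\mathcal H^2$ degenerate at $\Re w=\tfrac12$, so one must instead localize with the single prime; this is legitimate only because $\varphi$ omits $p$, which decouples the weight $\Phi$ (depending solely on $\chi(p)$) from the counting function (independent of $\chi(p)$), allowing the integral over $\mathbb T^\infty$ to factor. The technical heart is that the single-prime kernel fails to localize in the vertical direction, so a naive estimate diverges; Lemma~\ref{tonellitrick} is precisely what rescues the computation by turning the vertical integral of the counting function into the finite quantity $2\pi c_0 M(x)$. The remaining delicate point is the averaged-to-pointwise upgrade, where the submean value property of Theorem~\ref{submean} is indispensable.
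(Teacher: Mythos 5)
Your proposal is correct and follows essentially the same route as the paper's proof: probing compactness with the reproducing kernels of the single omitted prime, factoring the $\mathbb{T}^\infty$-integral in Stanton's formula because $\mathcal{N}_{\psi_\chi}$ is independent of $\chi(p)$, collapsing the vertical integration via Lemma~\ref{tonellitrick}, and upgrading to a pointwise bound with the weak submean value property of Theorem~\ref{submean}. The only differences are cosmetic (you evaluate the $\chi(p)$-integral in closed form and derive an averaged bound before applying Theorem~\ref{submean}, whereas the paper restricts to the strip $\frac{\Re s_n}{2}<\sigma<\frac{3\Re s_n}{2}$ and inserts the disk $D(\Re s_n,\frac{\Re s_n}{2})$ directly), and both arguments rest on exactly the same ingredients.
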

To prove the theorem we will use a variant of the classical technique, which gives necessary conditions for compactness. But, it is worth mentioning the ideas behind the steps of the proof. First, we considered a symbol that does not depend on a prime. We did that in order to separate the derivative of the reproducing kernel and the counting function  under the integral sign on the infinite polytorus, \eqref{eq:sep}. Then, using Lemma~\ref{tonellitrick}, the average counting function arises, \eqref{eq:usetrick}. In the last step of the proof, we make use of the translation invariance of that average, to start with an integral on the real line and then introduce a proper disk to gain an additional power of $\Re s_n$ and derive the necessary inequality, \eqref{eq:final}.
\begin{proof}
Without loss of generality we assume that $p=2$. Let $\{s_n\}_{n\geq1}\subset \mathbb{C}_0$ be an arbitrary sequence such that $\Re s_n\rightarrow0^+$. We observe that the induced sequence $\{K_{s_n,2}\}_{n\geq1
}$ of normalized reproducing kernels associated to the prime $2$, defined as
$$K_{s_n,2}(s)=\sqrt{1-4^{-\Re s_n}}\sum\limits_{n\geq0}\frac{1}{2^{n(\overline{s_n}+s)}}$$ converges weakly to $0$, as $n\rightarrow\infty$. Therefore 
\begin{equation}\label{comp0}
\lim\limits_{n\rightarrow+\infty}\norm{C_\psi(K_{s_n,2})}=0.
\end{equation}
Stanton's formula \eqref{staton} yields to the following
\begin{align}\label{eq:sep}
\norm{C_\psi(K_{s_n,2})}^2&\geq C\int\limits_{\mathbb{C}_0}\int\limits_{\mathbb{T}^\infty}\left|(K_{s_n,2})_{\chi^{c_0}}'(w)\right|^2\mathcal{N}_{\psi_\chi}(w,1)\,dm_\infty(\chi)\,dA(w)\nonumber\\
&\geq C\int\limits_{\mathbb{C}_0}\int\limits_{\mathbb{T}}\left|(K_{s_n,2})_{\chi_1^{c_0}}'(w)\right|^2\int\limits_{\mathbb{T}^\infty}\mathcal{N}_{\psi_\chi}(w,1)\,dm_\infty(\chi)\,dA(w).
\end{align}
By Parseval's formula and Lemma \ref{tonellitrick}
\begin{align*}
\norm{C_\psi(K_{s_n,2})}^2&\geq C(1-4^{-\Re s_n})\int\limits_{\mathbb{C}_0}\sum\limits_{n\geq1}n^24^{-n(\Re s_n+\Re s)}\int\limits_{\mathbb{T}^\infty}\mathcal{N}_{\psi_\chi}(w,1)\,dm_\infty(\chi)\,dA(w)\\
&\geq C(1-4^{-\Re s_n}) \int\limits_{0}^{+\infty}\sum\limits_{n\geq1}n^24^{-n(\Re s_n+\sigma)}\int\limits_{\mathbb{T}^\infty}\mathcal{N}_{\psi_\chi}(\sigma)\,dm_\infty(\chi)\,d\sigma\\
&\geq C \int\limits_{\frac{\Re s_n}{2}}^{\frac{3\Re s_n}{2}}\frac{1-4^{-\Re s_n}}{\left(1-4^{-(\Re s_n+\sigma)}\right)^3}\int\limits_{\mathbb{T}^\infty}\mathcal{N}_{\psi_\chi}(\sigma)\,dm_\infty(\chi)\,d\sigma.
\end{align*}
For sufficiently large $n\in\mathbb{N}$, we have that for every $t\in\mathbb{R}$
\begin{multline}\label{eq:usetrick}
\norm{C_\psi(K_{s_n,2})}^2\geq C \left(\Re s_n\right)^{-2}\int\limits_{\frac{\Re s_n}{2}}^{\frac{3\Re s_n}{2}}\int\limits_{\mathbb{T}^\infty}\mathcal{N}_{\psi_\chi}(\sigma)\,dm_\infty(\chi)\,d\sigma\\=C \left(\Re s_n\right)^{-2}\int\limits_{\frac{\Re s_n}{2}}^{\frac{3\Re s_n}{2}}\int\limits_{\mathbb{T}^\infty}\mathcal{N}_{\psi_\chi}(\sigma+it)\,dm_\infty(\chi)\,d\sigma.
\end{multline}
Thus
\begin{align}\label{eq:gainpower}
\norm{C_\psi(K_{s_n,2})}^2&\geq C \left(\Re s_n\right)^{-3}\int\limits_{\frac{\Re s_n}{2}}^{\frac{3\Re s_n}{2}}\int\limits_{-\sqrt{(\frac{\Re s_n}{2})^2-(\sigma-\Re s_n)^2}}^{\sqrt{(\frac{\Re s_n}{2})^2-(\sigma-\Re s_n)^2}}\int\limits_{\mathbb{T}^\infty}\mathcal{N}_{\psi_\chi}(\sigma+it)\,dm_\infty(\chi)\,dt \,d\sigma\nonumber\\
&\geq C (\Re s_n)^{-1}\frac{1}{\left|D(\Re s_n, \frac{\Re s_n}{2})\right|}\int\limits_{D(\Re s_n, \frac{\Re s_n}{2})}\int\limits_{\mathbb{T}^\infty}\mathcal{N}_{\psi_\chi}(w)\,dm_\infty(\chi)\,dA(w).
\end{align}
By Theorem \ref{submean},
\begin{equation}\label{eq:final}
\norm{C_\psi(K_{s_n,2})}^2\geq C\frac{\int\limits_{\mathbb{T}^\infty}\mathcal{N}_{\psi_\chi}(s_n)\,dm_\infty(\chi)}{\Re s_n}.
\end{equation}
The proof now follows from the equation \eqref{comp0},
\begin{equation*}
\lim\limits_{\Re w\rightarrow0}\frac{\int\limits_{\mathbb{T}^\infty}\mathcal{N}_{\psi_\chi}(w)\,dm_\infty(\chi)}{\Re w}=0.\qedhere
\end{equation*}
\end{proof}

\section{Proof of Theorem \ref{characterization}}
The proof of Theorem \ref{characterization} follows from Theorem~\ref{suf1} and Theorem \ref{suf2}. More specifically, let $C_\psi$ be a compact composition operator with symbol $\psi\in\mathfrak{G}_{\geq 1}$. Then, by Theorem~\ref{suf2} $C_\psi$ is compact on $\mathcal{H}_{\Lambda}^2$. Theorem~\ref{suf1} remains true if we substitute $\mathcal{H}^2$ with  $\mathcal{H}_{\Lambda}^2$. The symbol $\psi\in\mathfrak{G}_{\geq1}$ does not depend on the generalized prime $q_1$ and thus
	\begin{equation*}
\lim\limits_{\Re w\rightarrow0}\int\limits_{\mathbb{T}^\infty}\frac{\mathcal{N}_{\psi_\chi}(w)}{\Re w}\,dm_\infty(\chi)=0.
\end{equation*}
Note that in order to prove Theorem~\ref{characterization} it would be sufficient to add just one generalized prime, for example $q=\pi$.

Now we present the counterexample of F. Bayart. We will make use of the following characterization of compact composition operators with symbols $\psi(s)=c_0s+\psi(s)\in\mathfrak{G}_{\geq 1}$, where $\phi$ is a Dirichlet polynomial. 
\begin{theorem}[\cite{BB16}]
Let $\psi(s)=c_0s+\phi(s)\in \mathfrak{G}_{\geq 1}$, where $\phi$ is a Dirichlet polynomial. Then, the induced composition operator $C_\psi$ is compact on $\mathcal{H}^2$ if and only if the symbol has restricted range.
\end{theorem}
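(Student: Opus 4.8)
The plan is to reduce the statement to the Nevanlinna-type criterion \eqref{condition}, which for symbols supported on finitely many primes is \emph{equivalent} to compactness: sufficiency is Bayart's condition recorded above, while the necessity in this restricted class is Bailleul's theorem \cite{BL15}. Since $\phi$ is a Dirichlet polynomial it is supported on the finite set of primes dividing the frequencies occurring in $\phi$, so $C_\psi$ is compact on $\mathcal{H}^2$ if and only if $\mathcal{N}_\psi(w)/\Re w\to0$ as $\Re w\to0^+$. Recalling that $\psi$ has \emph{restricted range} when $\overline{\phi(\mathbb{C}_0)}\subseteq\mathbb{C}_\varepsilon$ for some $\varepsilon>0$, it therefore suffices to prove that \eqref{condition} holds if and only if the range is restricted. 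The sufficiency of restricted range is immediate: if $\Re\phi\geq\varepsilon$ on $\mathbb{C}_0$, then for $s\in\mathbb{C}_0$ one has $\Re\psi(s)=c_0\Re s+\Re\phi(s)\geq\varepsilon$ because $c_0\geq1$, so $\psi(\mathbb{C}_0)\subseteq\mathbb{C}_\varepsilon$; hence $\psi^{-1}(\{w\})\cap\mathbb{C}_0=\varnothing$ and $\mathcal{N}_\psi(w)=0$ whenever $0<\Re w<\varepsilon$, and \eqref{condition} holds trivially.

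For the converse I argue by contraposition, assuming the range is not restricted. Since $\Re\phi$ is harmonic and nonnegative on $\mathbb{C}_0$, the minimum principle gives $\inf_{\mathbb{C}_0}\Re\phi=\min_{\chi\in\mathbb{T}^\infty}\Re B(\phi)(\chi)$, the minimum of a continuous function on the compact torus; failure of restricted range forces this value to be $0$. Fix $\chi_0\in\mathbb{T}^\infty$ realizing it; then $\phi_{\chi_0}(0)=B(\phi)(\chi_0)$ has vanishing real part, while $\Re\phi_{\chi_0}\geq0$ on $\mathbb{C}_0$ since the vertical limit $\psi_{\chi_0}$ again lies in $\mathfrak{G}_{\geq1}$. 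If $\phi$ is a nonzero imaginary constant the claim is checked directly, as then $\mathcal{N}_\psi(w)=\Re w/c_0$. Otherwise $\Re\phi_{\chi_0}$ is a nonconstant nonnegative harmonic function vanishing at the boundary point $0$, so the Hopf boundary lemma yields a strictly positive inner normal derivative $\partial_\sigma\Re\phi_{\chi_0}(0)>0$; by the Cauchy--Riemann equations $\Re\psi_{\chi_0}'(0)=c_0+\partial_\sigma\Re\phi_{\chi_0}(0)>0$, and in particular $\psi_{\chi_0}'(0)\neq0$.

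Consequently $\psi_{\chi_0}$ is conformal near $0$, and choosing a target $w'$ with $w'-\psi_{\chi_0}(0)$ a small positive multiple of $\psi_{\chi_0}'(0)$ produces a preimage $s'\in\mathbb{C}_0$ with $\Re s'\geq c\,\Re w'$ for a fixed $c>0$. It remains to transfer this to $\psi$ itself. Because $\phi$ is a finite sum, Kronecker's theorem \cite{BOH34} furnishes reals $t_j\to\infty$ with $n^{-it_j}\to\chi_0(n)$ for every frequency, so $\phi(\,\cdot\,+it_j)\to\phi_{\chi_0}$ uniformly on a neighborhood of $0$ and $\psi(\,\cdot\,+it_j)=ic_0t_j+\psi_{\chi_0}+o(1)$ there. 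A quantitative inverse-function (Rouché) argument then yields, for all large $j$, a genuine preimage $s_j\in\psi^{-1}(\{w'+ic_0t_j\})$ with $\Re s_j>0$ and $\Re s_j\geq\tfrac{c}{2}\Re w'$. Letting $\Re w'\to0$ gives points $w$ with $\Re w\to0$ and $\mathcal{N}_\psi(w)\geq\Re s_j\geq\tfrac{c}{2}\Re w$, so \eqref{condition} fails and $C_\psi$ is not compact.

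The delicate step, which I expect to demand the most care, is precisely this transplantation near the imaginary axis: the counting function is only comfortably controlled in the interior (compare Theorem~\ref{bayartL1}, which bounds $\mathcal{N}_\psi(w)\leq\Re w/c_0$ but gives no lower bound), so both the uniform approximation of $\psi$ by $\psi_{\chi_0}$ and the persistence of the conformal preimage up to the boundary must be made quantitative, with the Hopf lemma supplying the nonvanishing derivative that drives the whole estimate.
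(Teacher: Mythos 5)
The paper offers no proof of this statement --- it is quoted from \cite{BB16} purely to set up Example~\ref{exmp} --- so there is nothing internal to compare your argument against; I can only judge it on its own terms. Its geometric core is sound. Restricted range forces $\mathcal{N}_\psi(w)=0$ for $0<\Re w<\varepsilon$, so \eqref{condition} holds trivially and Bayart's sufficiency applies. In the converse direction, the chain you build --- a minimizing character $\chi_0$ on the finite-dimensional torus with $\Re\phi_{\chi_0}(0)=0$, the bound $\Re\psi_{\chi_0}'(0)\geq c_0>0$ (for which you do not even need Hopf's lemma: $\partial_\sigma\Re\phi_{\chi_0}(0)\geq 0$ is automatic because $\Re\phi_{\chi_0}\geq 0=\Re\phi_{\chi_0}(0)$ and $\phi_{\chi_0}$ is entire), local conformality producing $s'$ with $\Re s'\geq c\,\Re w'$, and the Kronecker--Rouch\'e transfer back to $\psi$ along vertical translations --- is correct and, as you note, uniform in the constant $c$ as $\Re w'\to 0$, which is what makes \eqref{condition} fail.

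The genuine gap is in your opening reduction. You invoke Bailleul's necessity theorem on the sole ground that $\phi$ is supported on finitely many primes, but that theorem, as recalled in the introduction of this paper, additionally requires the symbol to be \emph{finitely valent}, and you never verify this. It is true for $\psi=c_0s+\phi$ with $\phi$ a Dirichlet polynomial, but it needs an argument: since $|\phi|\leq\sum_n|a_n|=:M$ on $\mathbb{C}_0$, every solution of $\psi(s)=w$ with $\Re s>0$ lies in the box $0<\Re s\leq\Re w/c_0$, $|\Im s-\Im w/c_0|\leq M/c_0$, and Jensen's formula applied on a disk centred at a point well to the right of this box (where $|\psi-w|\geq c_0$) bounds the number of such solutions uniformly for $\Re w$ bounded. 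Be aware that the obvious way to dodge Bailleul --- testing compactness directly on reproducing kernels at the points $s_j$ you construct --- does not work, because $k_w(s)=\zeta(\overline{w}+s)$ belongs to $\mathcal{H}^2$ only for $\Re w>\tfrac12$, and a polynomial with $\inf_{\mathbb{C}_0}\Re\phi=0$ may satisfy $\inf_{\mathbb{C}_{1/2}}\Re\phi>0$ (for instance $\phi(s)=1-2^{-s}$); this is precisely the obstruction the present paper emphasizes and circumvents with the averaged counting function. So your route through the counting function is the right one, but the finite-valence hypothesis must be supplied before Bailleul's theorem can be cited.
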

We say that a symbol $\psi\in\mathfrak{G}$ has unrestricted range if
\begin{equation}
\inf\limits_{s\in\mathbb{C}_0}\Re\phi(s)=\left\{
\begin{array}{ll}
\frac{1}{2} & \text{ if } \,c_0=0, \\
0 & \text{ if } \,c_0\geq1 .
\end{array} \right.
\end{equation}

It is worth mentioning that a symbol with restricted range always induces a compact composition operator on $\mathcal{H}^2$, \cite[Theorem 20, Theorem 21]{BAY02}.
 
\begin{example}\label{exmp}
We consider the symbol $\psi(s)=1+s-2^{-s}$. The composition operator $C_\psi$ is not compact on $\mathcal{H}^2$, since $\psi(s)=1+s-2^{-s}$ has unrestricted range. The vertical translations of it have the following form
\begin{equation*}
\psi_z(s)=1+s-z2^{-s},\qquad z\in\mathbb{T}.
\end{equation*}  
The function $h(z):=\inf\limits_{\Re s>0}|\psi_z(s)|$ is continuous on $z$ and vanishes only for $z=1$. For $\varepsilon>0$ sufficiently small there exists a constant $C(\varepsilon)>0$ such that
\begin{equation*}
h(z)\geq 2\varepsilon,\qquad |z-1|>C(\varepsilon)
\end{equation*}
and $C(\varepsilon)\rightarrow 0^+$, as $\varepsilon\rightarrow0^+$. Applying Theorem~\ref{bayartL1}, we have that
\begin{equation*}
\int\limits_{\mathbb{T}}\frac{N_{\psi_z}(\varepsilon)}{\varepsilon}\,dz=\int\limits_{1-C(\varepsilon)}^{1+C(\varepsilon)}\frac{N_{\psi_z}(\varepsilon)}{\varepsilon}\,dz\leq 2 C(\varepsilon).
\end{equation*}
Thus
\begin{equation*}
\lim\limits_{\Re w\rightarrow 0}\int\limits_{\mathbb{T}}\frac{N_{\psi_z}(w)}{\Re w}\,dz=0.
\end{equation*}
\end{example}
\bibliographystyle{amsplain-nodash} 
\bibliography{refff} 
\end{document}